\patchcmd{\thmt@mklistcmd}{\thmt@thmname}{}{}{}
\patchcmd{\thmt@mklistcmd}{\thmt@thmname}{}{}{}
\declaretheoremstyle[
spaceabove=1em,
spacebelow=1em,
headfont=\normalfont\bfseries,
notefont=\mdseries,
notebraces={(}{)},
bodyfont=\normalfont\color{black},
postheadspace=1em,
]{mystyle}
\declaretheoremstyle[
spaceabove=1em,
spacebelow=1em,
headfont=\normalfont\itshape,
notefont=\mdseries,
notebraces={(}{)},
bodyfont=\normalfont\color{black},
postheadspace=1em,
qed=$\blacksquare$
]{myproofstyle}
\declaretheoremstyle[
spaceabove=1em,
spacebelow=1em,
headfont=\normalfont\bfseries,
notefont=\mdseries,
notebraces={(}{)},
bodyfont=\normalfont\color{black},
postheadspace=1em,
qed=$\blacktriangle$
]{myexstyle}
\numberwithin{equation}{section}
\declaretheorem[name=Theorem,sibling=equation,Refname={Theorem,Theorems},style=mystyle]{thm}
\declaretheorem[name=Example,sibling=equation,style=myexstyle]{ex}
\declaretheorem[name=Definition,sibling=equation,style=mystyle]{defn}
\declaretheorem[name=Lemma,Refname={Lemma,Lemmas},sibling=equation,style=mystyle]{lem}
\declaretheorem[name=Corollary,sibling=equation,style=mystyle]{cor}
\declaretheorem[name=Proposition,sibling=equation,style=mystyle]{prop}
\declaretheorem[name=Algorithm,sibling=equation,style=mystyle]{algo}
\crefname{cor}{Corollary}{Corollaries}
\declaretheorem[name=Proof,numbered=no,style=myproofstyle]{proof}
\numberwithin{equation}{section}
\newcommand{\charmat}[4]{\trpos*{\begin{pmatrix} #1 & #2\end{pmatrix}} \\ \trpos*{\begin{pmatrix} #3 & #4\end{pmatrix}}}
\let\com\iffalse
\setlist[itemize]{noitemsep, topsep=0pt}
\setlist[enumerate]{noitemsep, topsep=0pt}
\begin{document}


\let\newdommand\NewDocumentCommand
\let\Pr\undefined
\newcommand{\ifmeron}[1]{\IfNoValueTF{#1}{}{#1}}%
\newcommand{\ifmeronpar}[1]{\IfNoValueTF{#1}{}{(#1)}}%
\newcommand{\ifmeronsub}[1]{{\IfNoValueTF{#1}{}{_{#1}}}}%
\newcommand{\ifmeronsup}[1]{\IfNoValueTF{#1}{}{^{#1}}}%
\newcommand{\newtmpvar}[2]{\newcommand{#1}{#2}}%

\NewDocumentEnvironment{tmpvar}{}{\let\tmp\undefined}%

\newdommand{\depcommand}{mO{0}m}{\newcommand{#1}[#2]{{\color{red}#3}}}%
\newdommand{\depdommand}{mmm}{\newdommand{#1}{#2}{{\color{red}#3}}}%


\newcommand{\IZpos}{\IZ_{>0}}%
\newcommand{\IZnn}{\IZ_{\geq 0}}

\newcommand{\primeset}{\ensuremath{S}}
\newcommand{\mS}{\ensuremath{m_\primeset}}
\newcommand{\frakmS}{\ensuremath{\frakm_\primeset}}
\newcommand{\PS}{\ensuremath{P_\primeset}}


\newcommand{\nf}{K}%
\newcommand{\nfup}{L}%
\newcommand{\nfupup}{M}%
\newcommand{\nfext}{\nfup/\nf}%
\newcommand{\nfextbig}{\nfupup/\nf}%
\newcommand{\nfextup}{\nfupup/\nfup}%

\newcommand{\pl}{v}%
\newcommand{\plup}{w}%
\newcommand{\plext}{\plup/\pl}%

\newcommand{\lf}{\nf_\pl}%
\newcommand{\lfup}{\nfup_\plup}%
\newcommand{\lfupup}{\nfupup'}%
\newcommand{\lfext}{\lfup/\lf}%
\newcommand{\lfextup}{\lfupup/\lfup}%
\newcommand{\lfextbig}{\lfupup/\lf}%

\newdommand{\galgp}{O{\nfext}}{ \Gal(#1) }%
\newdommand{\decgp}{}{D(\plext)}%

\newcommand{\galgpHnf}[1]{\pmb{G}_{#1}}%
\newcommand{\galgpHcmfr}[1]{\pmb{G}(\,{#1}\,)}%


\newcommand{\cmf}{K}%
\newcommand{\cmt}{\Phi}%

\newcommand{\galcl}{L}%

\newcommand{\cconj}[1]{\overline{#1}}%
\newcommand{\cconjmap}{\rho}%

\newcommand{\embtoC}{\iota_\IC}%

\newcommand{\tmpcmf}{K'}%

\newcommand{\cmfext}{\cmfup/\cmf}%

\newcommand{\cmfor}{{\cmf_0^r}}%
\newcommand{\cmfo}{\cmf_0}%
\newcommand{\cmfr}{{{\cmf}^r}}%
\newcommand{\cmtr}{\cmt^r}%

\newcommand{\cmfox}{\cmf_0^\times}%

\newcommand{\cmfcmt}{(\cmf, \cmt)}%
\newcommand{\cmfrcmtr}{(\cmfr, \cmtr)}%

\newdommand{\nm}{mo}{N_{#1}\ifmeronpar{#2}}%
\newcommand{\typnm}{\nm{\cmt}}%
\newcommand{\typnmr}{\nm{\cmtr}}%
\newdommand{\relnm}{o}{\nm{\cmf/\cmfo}\ifmeronpar{#1}}%
\newcommand{\absnm}[1]{\nm{#1/\IQ}}%


\newcommand{\cond}[1]{\frakf_{#1}}%
\newcommand{\reld}[1]{\frakd_{#1}}%

\newcommand{\mdls}{\frakm}%
\newdommand{\mdlsfun}{o}{ \mdls\ifmeronpar{#1} }%
\newcommand{\mdlsZ}{m}%

\newdommand{\Id}{moo}{I_{#1}\ifmeronsup{#3}\ifmeronpar{#2}}%
\newdommand{\Pr}{moo}{P_{#1}\ifmeronsup{#3}\ifmeronpar{#2}}%
\newdommand{\clgp}{moo}{\Cl_{#1}\ifmeronsup{#3}\ifmeronpar{#2}}%
\newdommand{\srcg}{mo}{\frakC_{#1}\ifmeronpar{#2}}%
\newcommand{\csbgp}{C}
\newdommand{\Idnf}{o}{ \Id{\nf}[#1] }%
\newdommand{\Idcmf}{o}{ \Id{\cmf}[#1] }%
\newdommand{\Idcmfo}{o}{ \Id{\cmfo}[#1] }%
\newdommand{\Idcmfr}{o}{ \Id{\cmfr}[#1] }%
\newdommand{\Idcmfrcmtr}{o}{ \Id{\cmfr,\cmtr}[#1] }%
\newdommand{\Prnf}{o}{ \Pr{\nf}[#1] }%
\newdommand{\Prcmf}{o}{ \Pr{\cmf}[#1] }%
\newdommand{\Prcmfr}{o}{ \Pr{\cmfr}[#1] }%
\newdommand{\Clnf}{o}{ \clgp{\nf}[#1] }%
\newdommand{\Clcmf}{o}{ \clgp{\cmf}[#1] }%
\newdommand{\Clcmfo}{o}{ \clgp{\cmfo}[#1] }%
\newdommand{\Clcmfr}{o}{ \clgp{\cmfr}[#1] }%
\newdommand{\Clcmfor}{o}{ \clgp{\cmfor}[#1] }%
\newdommand{\srcgcmf}{o}{ \srcg{\cmf}[#1] }%
\newdommand{\srcgcmfr}{o}{ \srcg{\cmfr}[#1] }%

\newdommand{\hcf}{moo}{H_#1\ifmeronsup{#3}\ifmeronpar{#2}}%
\newdommand{\Hnf}{o}{ \hcf{\nf}[#1] }%
\newdommand{\Hcmf}{o}{ \hcf{\cmf}[#1] }%
\newdommand{\Hcmfr}{o}{ \hcf{\cmfr}[#1] }%
\newdommand{\Hcmfo}{o}{ \hcf{\cmfo}[#1] }%
\newdommand{\Hcmfor}{o}{ \hcf{\cmfor}[#1] }%

\newcommand{\nfmone}{\ensuremath{\nf_{\frakm, 1}}}%
\newcommand{\onemodstar}{\ensuremath{1 \mod^\ast}}%
\newcommand{\cmfone}{\cmf_{\mdls, 1}}%


\newdommand{\CMcmf}{o}{ \CM_{\cmf}\ifmeronpar{#1} }%
\newdommand{\CMcmfrcmtr}{o}{ \CM_{\cmfr, \cmtr}\ifmeronpar{#1} }%
\newdommand{\XiKrPhir}{o}{ \Hcmfor(#1)\CMcmfrcmtr[#1] }%


\newcommand{\cgquo}{G}%
\newcommand{\cgext}{E}%
\newcommand{\cgnor}{A}%
\newdommand{\embprob}{o}{ (\nfext, \vareps_{\ifmeron{#1}} ) }%


\newcommand{\roi}[1]{\calO_#1}%
\newcommand{\roix}[1]{\roi{#1}^\times}%
\newcommand{\roip}[1]{\roi{#1}^+}%
\newcommand{\roixp}[1]{{\roi{#1}^{\times +}}}%

\newcommand{\Onf}{\roi{\nf}}%
\newcommand{\Onfup}{\roi{\nfup}}%
\newcommand{\Ocmf}{\roi{\cmf}}%
\newcommand{\Ocmfup}{\roi{\cmfup}}%
\newcommand{\Ocmfo}{\roi{\cmfo}}%
\newcommand{\Ocmfor}{\roi{\cmfor}}%
\newcommand{\Ocmfr}{\roi{\cmfr}}%
\newcommand{\Ocmfx}{\roix{\cmf}}%
\newcommand{\Ocmfxp}{\roixp{\cmf}}%
\newdommand{\Ocmfonex}{o}{\roix{{\cmf, \IfNoValueTF{#1}{\mdls}{#1}, 1}}}%
\newcommand{\Ocmfox}{\roix{\cmfo}}%
\newcommand{\Ocmfop}{\roip{\cmfo}}%
\newcommand{\Ocmfoxp}{\roixp{\cmfo}}%

\newdommand{\idealstar}{o}{\IfNoValueTF{#1}{\left(\Onf/\mdls\right)^\times}{\left(\Onf/{#1}\right)^\times}}


\newcommand{\genord}[2]{\langle #1 \rangle_{#2}}%
\newcommand{\genordadd}[2]{\frac{\IZ}{#2\IZ} \, #1}%
\newcommand{\genvarstd}{\mathbf{e}}
\newcommand{\ordvar}{d}
\newcommand{\gencnt}{n}
\newcommand{\grp}{G}
\newcommand{\subgrp}{H}
\newdommand{\grpdlg}{o}{V_{\IfNoValueTF{#1}{\grp}{#1}}}

\newcommand{\dlogiso}[1]{\gamma_{#1}}

\newcommand{\fuo}{\vareps_0}%


\newcommand{\effzero}{\pi_m}%
\newdommand{\effone}{o}{\eta}
\newdommand{\efftwo}{o}{r}%

\newcommand{\ifmeronbrak}[1]{\IfNoValueTF{#1}{(m)}{(#1)}}%
\newcommand{\ifmeronnobrak}[1]{\IfNoValueTF{#1}{m}{#1}}%
\newcommand{\effzerotwo}{\pi_2}%
\newcommand{\effonetwo}{\effone[2]}
\newcommand{\efftwotwo}{\effone[2]}

\newcommand{\None}{N_1}%
\newcommand{\Ntwo}{N_2}%
\newcommand{\ibari}[1]{#1\bar{#1}}%

\newcommand{\calAC}{\calA\hspace{-0.2em}\calC}
\newcommand{\srcgelt}{b}

\let\trpos\undefined
\newdommand{\trpos}{sm}{\IfBooleanTF{#1}{#2^\intercal}{\left(#2\right)^\intercal}}

\makeatletter
\newcommand{\labeleqstar}{\Hy@raisedlink{\hypertarget{label:eqstar}{}}}
\makeatother

\newcommand{\eqstar}[1]{(\hyperlink{label:eqstar}{\ensuremath{{\star}_{#1}}})}

\newcommand{\ppas}{A}
\newcommand{\torb}{\calB}
\newdommand{\ppastorb}{o}{(\ppas\ifmeronsub{#1}, \torb\ifmeronsub{#1})}
\newcommand{\ppastorbpr}{(\ppas', \torb')}
\newcommand{\hypell}{C}
\newcommand{\hypinv}{\iota}
\newdommand{\Jac}{o}{J(#1)}

\newcommand{\blambda}{\pmb{\lambda}}
\newcommand{\fromoneto}[1]{\in \{1, \ldots, #1\}}

\newcommand{\bbeta}{\pmb{\beta}}

\newcommand{\galgpblw}{\bbGa}%
\newcommand{\alphar}{\alpha_r}


\newcommand{\torsgen}{t}
\newcommand{\torsord}{T}

\newdommand{\cmfdab}{momm}{\IQ[#1]~/~(#1^4~+~#3~#1^2~+~#4)}
\newdommand{\cmfodab}{momm}{\IQ[#1]~/~(#1^2~+~#3~#1~+~#4)}

\newcommand{\writefilename}{}


\title[Computing the Hilbert Class Fields of Quartic CM Fields Using CM]{Computing the Hilbert Class Fields of Quartic CM Fields Using Complex Multiplication}
\author{Jared Asuncion}
\thanks{INRIA, LFANT, F-33400 Talence, France\\
CNRS, IMB, UMR 5251, F-33400 Talence, France\\
Univ. Bordeaux, IMB, UMR 5251, F-33400 Talence, France}
\thanks{Universiteit Leiden, Postbus 9512, 2300 RA Leiden, The Netherlands}
\thanks{This project has been supported by a travel grant managed by the Agence Nationale de la Recherche under the program ``Investissements d'avenir'' and bearing the reference ANR-10-IDEX-03-02.}

\begin{abstract}
Let $K$ be a quartic CM field, that is, a totally imaginary quadratic extension of a real quadratic number field. In a 1962 article titled \textit{On the class-fields obtained by complex multiplication of abelian
varieties}, Shimura considered a particular family $\{F_K(m) : m \in \IZ_{>0}\}$ of abelian extensions of $K$, and showed that the Hilbert class field $H_K$ of $K$ is contained in $F_K(m)$ for some positive integer~$m$. We make this $m$ explicit. We then give an algorithm that computes a set of defining polynomials for the Hilbert class field using the field~$F_K(m)$. Our proof-of-concept implementation of this algorithm computes a set of defining polynomials much faster than current implementations of the generic Kummer algorithm for certain examples of quartic CM fields.
\end{abstract}

\maketitle

\section{Introduction}
\label{sec:intro}
\writefilename

An abelian extension of a number field $\nf$ is a Galois extension $\nfext$ whose Galois group $\galgp$ is abelian.
The Kronecker-Weber Theorem states that every abelian extension $\nfup$ of $\IQ$ is contained in $\IQ(\exp(2\pi i \tau) : \tau \in \IQ)$,
the field obtained by adjoining to $\IQ$ the values of the analytic map $\tau \mapsto \exp(2\pi i \tau)$ evaluated at $\IQ$. The twelfth out of the twenty-three problems Hilbert posed in 1900 asks, roughly speaking, whether a statement analogous to the Kronecker-Weber Theorem can be made for number fields $K$ different from $\IQ$.

One tool in attacking this problem is the theory of complex multiplication (CM),
developed by Shimura \cite{shimura1971,shimura2016}
during the second half of the 20th century.
CM theory works for a specific family of number fields, called CM fields.
A CM field $\cmf$ is a totally imaginary quadratic extension of a totally real number field $\cmfo$.

For a CM field $\cmf$,
class field theory proves the existence of the family
$$\calF~=~\{\Hcmf[m]~:~m~\in~\IZ\}$$
of \textit{ray class fields} of $\cmf$.
Every finite degree abelian extension of $\cmf$
is contained in at least one element of $\calF$. Given a CM field $\cmf$ and a CM type $\cmt$ (defined in \Cref{subsec:cm}), CM theory gives a reflex pair $\cmfrcmtr$ consisting of a CM field $\cmfr$ and CM type $\cmtr$, and defines a certain field $\CMcmfrcmtr[m]$ such that
\begin{itemize}
	\item $\CMcmfrcmtr[m] \subseteq \Hcmfr[m]$, and
	\item $\CMcmfrcmtr[m]$ is an algebraic extension of $\cmfr$ that can be obtained by adjoining to $\cmfr$ algebraic numbers which are \textit{special values of modular functions}. These are the analogue of $\exp(2\pi i \tau)$.
\end{itemize}

For the case when $\cmf$ is a CM field of degree $2$, it has been shown that $\cmf = \cmfr$ and $\CMcmfrcmtr[m]$ is exactly the ray class field $\Hcmf[m]$. This is not the case for higher-degree CM fields. In fact, the field $\cmf$ is not necessarily equal to $\cmfr$ for the next simplest case -- CM fields of degree $4$. One natural question to then ask is: can one use CM theory to compute ray class fields $\Hcmf[m]$ of quartic CM fields $\cmf$, and if so how?

We answer this question for the case of $\Hcmf[1]$. We show how one can use CM theory to compute the Hilbert class field $\Hcmf[1]$, the largest unramified abelian extension, of a quartic CM field $\cmf$. Using this theoretical result, we give an algorithm which computes $\Hcmf[1]$ for quartic CM fields satisfying properties elaborated on later in this article. Finally, we write a proof-of-concept implementation of this algorithm, which we use to find defining polynomials of Hilbert class fields of quartic CM fields. Using this implementation, we found that this algorithm succeeds to compute the Hilbert class field of certain quartic CM fields in which current implementations of the well-known Kummer theory algorithm take much longer.

\begin{tmpvar} \newcommand{\tif}{K}%
For any number field $\tif$ and any positive integer $m$, let
$E_\tif(m)$ be the smallest subfield of $H_\tif(m)$ containing $\tif$
such that $\Gal(H_\tif(m) / E_\tif(m))$ is of exponent at most $2$.
The main theoretical result of this article, which we specialize to the case of primitive quartic CM fields in \cref{cor:main1}, is as follows.
\end{tmpvar}

\begin{thm} \label{thm:main1}
Let $K$ be a number field without real embeddings. Let $\primeset$ be a finite set of prime ideals of $\Onf$ such that
\begin{itemize}
\item $| \Cl_K(1) / \langle \primeset \rangle |$ is odd,
\item $\primeset$ contains all prime ideals above $2$,
\item $\primeset$ contains at least $3$ elements.
\end{itemize}
Let $\PS = \{ p : p \text{ is a rational prime below } \frakp \text{ for some } \frakp \in \primeset \}$.
Let $\mS = 4 \cdot \prod_{p \in P_S} p$. Then $H_K(1) \subseteq E_K(\mS)$.
\end{thm}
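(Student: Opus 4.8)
The plan is to use class field theory to turn $H_K(1)\subseteq E_K(\mS)$ into a statement about ideal classes, and then to rule out the failure case by exhibiting an everywhere-unramified quadratic extension of $K$ that the first hypothesis forbids. Write $\frakmS=\mS\Onf$. Since $K$ has no real places, the Artin map identifies $\Gal(H_K(\mS)/K)$ with $\Cl_K(\mS)$; under this identification $\Gal(H_K(\mS)/H_K(1))$ is the kernel of the natural surjection $\Cl_K(\mS)\twoheadrightarrow\Cl_K(1)$, whereas $\Gal(H_K(\mS)/E_K(\mS))$ is the largest subgroup of $\Cl_K(\mS)$ of exponent at most $2$, which, the group being abelian, is its full $2$-torsion subgroup $\Cl_K(\mS)[2]$. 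Hence $H_K(1)\subseteq E_K(\mS)$ is equivalent to $\Cl_K(\mS)[2]\subseteq\ker(\Cl_K(\mS)\to\Cl_K(1))$, and I would unwind this to the concrete claim: \emph{every fractional ideal $\mathfrak a$ of $\Onf$ coprime to $\frakmS$ with $\mathfrak a^{2}=(\alpha)$ for some $\alpha\in K^{\times}$ satisfying $\alpha\equiv 1\pmod{\frakmS}$ is principal.} Fix such a pair $(\mathfrak a,\alpha)$.

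The first thing to establish is that $\alpha$ is a square in $K_{\mathfrak p}^{\times}$ for every prime $\mathfrak p\mid\frakmS$. For $\mathfrak p\nmid 2$ this is immediate from $\alpha\equiv 1\pmod{\mathfrak p}$ and Hensel's lemma. For $\mathfrak p\mid 2$ I would use that $2\in\PS$ — which holds because $\primeset$ contains every prime above $2$ — so that the factor $4$ of $\mS$ together with the factor $p=2$ of $\prod_{p\in\PS}p$ makes the $2$-part of $\mS$ equal to $2^{3}$; then $v_{\mathfrak p}(\frakmS)=3\,v_{\mathfrak p}(2)>2\,v_{\mathfrak p}(2)$, and Hensel's lemma applied to $X^{2}-\alpha$ at $X=1$ again produces a square root of $\alpha$ in $K_{\mathfrak p}$. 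This step is exactly what forces the shape $\mS=4\prod_{p\in\PS}p$ and what uses the hypothesis on the primes above $2$.

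Next, if $\alpha$ is already a square in $K^{\times}$, say $\alpha=\gamma^{2}$, then $\mathfrak a^{2}=(\gamma)^{2}$ forces $\mathfrak a=(\gamma)$ and we are done. Otherwise $L=K(\sqrt{\alpha})$ is a quadratic extension of $K$, and I would show it is unramified at every place: at $\mathfrak p\mid\frakmS$ the prime $\mathfrak p$ splits in $L$ by the previous step; at $\mathfrak p\nmid 2$ the extension is unramified because $v_{\mathfrak p}(\alpha)=2\,v_{\mathfrak p}(\mathfrak a)$ is even; and the archimedean places contribute nothing since $K$ is totally imaginary. Hence $L\subseteq H_K(1)$, and composing the Artin isomorphism with restriction to $L$ yields a surjection $\Cl_K(1)\twoheadrightarrow\Gal(L/K)\cong\IZ/2\IZ$ sending the class of a prime $\mathfrak q$ to its Frobenius in $L/K$. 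But every $\mathfrak p\in\primeset$ divides $\frakmS$ (its underlying rational prime lies in $\PS$), hence splits in $L$, so its class lies in the kernel of this surjection; therefore $\langle\primeset\rangle$ is contained in an index-$2$ subgroup of $\Cl_K(1)$, making $|\Cl_K(1)/\langle\primeset\rangle|$ even and contradicting the first hypothesis. So $\alpha$ must have been a square and $\mathfrak a$ is principal, which completes the argument.

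The step I expect to be the main obstacle is this last one: recognising that the local-square information carried by $\alpha$ — itself a consequence of the precise definition of $\mS$ — forces $K(\sqrt{\alpha})$ to be an everywhere-unramified quadratic extension inside $H_K(1)$, and then converting the splitting of the primes of $\primeset$ in that extension into the parity contradiction through Artin reciprocity. By comparison, the local computation in the middle step is a routine $2$-adic Hensel estimate, and the opening reduction is formal class field theory; the only point there that needs care is that $E_K(\mS)$ corresponds to the full $2$-torsion subgroup of $\Cl_K(\mS)$ rather than to its subgroup of squares.
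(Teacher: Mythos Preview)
Your argument is correct, and it is genuinely different from --- and considerably more elementary than --- the paper's proof. The paper does not reduce to a single $2$-torsion class and a Kummer extension $K(\sqrt{\alpha})$; instead it decomposes $\Gal(H_K(1)/K)$ into its odd part and cyclic $2$-components $L_i/K$, and for each $L_i$ solves the embedding problem $C_{2^{a_i}}\hookrightarrow C_{2^{a_i+1}}$ using Richter's local--global principle together with Crespo's theorem (this is where the hypotheses on $S$ enter in the paper). Conductor bounds then show the resulting $M_i$ lie in $H_K(\mathfrak m_S)$, and one argues that any $\sigma$ of order $\leq 2$ in $\Gal(H_K(\mathfrak m_S)/K)$ fixes each $L_i$ because its image in $\Gal(M_i/K)\cong C_{2^{a_i+1}}$ is a square. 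Your route bypasses all of this: you work with a single $2$-torsion class, verify the Hensel bound $v_{\mathfrak p}(m_S)>2v_{\mathfrak p}(2)$ directly from the factor $4\cdot 2$ in $m_S$, and then use Artin reciprocity on $K(\sqrt{\alpha})$ to force the parity contradiction.

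What each approach buys: the paper's embedding-problem method is part of a broader framework (useful, for instance, if one wanted variants with other exponents or other structural information about the $M_i$), whereas your approach is shorter, self-contained, and --- notably --- never uses the hypothesis that $\primeset$ has at least three elements. That condition enters the paper's proof only through Crespo's bound $r_2+a_2+\delta_2<\#\primeset$; your argument shows it is superfluous for the statement as written. This is worth pointing out explicitly if you write this up.
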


The existence of a positive integer $m$ such that $\Hcmf[1] \subseteq E_K(m)$ was already known by Shimura, via the proof of \cite[Theorem 2]{shimura1962}. \Cref{thm:main1} gives a formula for such an $m$ thereby making an effective version of Shimura's result.

We will be using a result of Shimura \cite{shimura1962}, later refined by Streng, which gives an example of a field that is `close' but not quite the Hilbert class field.

\begin{thm}[label={thm:shimuracontainmentbelow},name={\cite[Theorem I.10.3]{streng2010}}]
	Let $\cmf$ be a quartic CM field which is not bicyclic Galois\footnote{Equivalently, it is either cyclic Galois or not Galois.} and $\cmt$ a CM type of $\cmf$, and let $\cmfrcmtr$ be the reflex CM pair of $\cmfcmt$.
	Then the Galois group $\Gal(\Hcmfr(m) / \XiKrPhir[m])$ is abelian of exponent at most $2$ for any positive integer $m$.
\end{thm}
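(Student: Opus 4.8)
\begin{sketch}
The plan is to recast the three fields via class field theory over $\cmfr$ and to reduce the statement to a two-torsion computation in the ray class group $\Clcmfr[m]$. Identifying $\Gal(\Hcmfr(m)/\cmfr)\cong\Clcmfr[m]$, subfields of $\Hcmfr(m)$ containing $\cmfr$ correspond to subgroups, and a compositum corresponds to the intersection of the associated subgroups. By functoriality of the Artin map, I would identify $\Gal(\Hcmfr(m)/\Hcmfor(m)\cmfr)$ with the kernel $U_0$ of the relative norm $N_{\cmfr/\cmfor}\colon\Clcmfr[m]\to\clgp{\cmfor}[m]$; and by the Main Theorem of complex multiplication (Shimura's reciprocity law) I would identify $\Gal(\Hcmfr(m)/\CMcmfrcmtr[m])$ with a subgroup $U_{\mathrm{CM}}$ contained in the kernel of the reflex type norm, i.e.\ in $\{[\mathfrak a]\in\Clcmfr[m]:\typnmr(\mathfrak a)\text{ is principal with a generator}\equiv 1\text{ modulo }m\}$. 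Since $\XiKrPhir[m]=\Hcmfor(m)\CMcmfrcmtr[m]$, this gives $\Gal(\Hcmfr(m)/\XiKrPhir[m])\cong U_0\cap U_{\mathrm{CM}}$, which is automatically abelian; it then remains to show that it has exponent at most $2$, i.e.\ that every class in it squares to the identity.

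Fixing a class in $U_0\cap U_{\mathrm{CM}}$ and a representative ideal $\mathfrak a$ of $\cmfr$ coprime to $m$, membership in $U_0$ gives, through $\mathfrak a\,\cconj{\mathfrak a}=N_{\cmfr/\cmfor}(\mathfrak a)\roi{\cmfr}$, an element $\beta\in\cmfor$ with $\beta\equiv 1$ modulo $m$ and $\mathfrak a\,\cconj{\mathfrak a}=(\beta)$; in particular $[\cconj{\mathfrak a}]=[\mathfrak a]^{-1}$, so $[\mathfrak a]$ lies in the minus part for complex conjugation and $[\mathfrak a]^{2}=[\mathfrak a/\cconj{\mathfrak a}]$. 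Membership in $U_{\mathrm{CM}}$ gives $\mu\in\cmf$ with $\mu\equiv 1$ modulo $m$ and $\typnmr(\mathfrak a)=(\mu)$.

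The crux is the following double-type-norm identity, which I would prove by lifting ideals to the Galois closure $\widetilde{\cmf}$ of $\cmf/\IQ$ and computing in $\IZ[\Gal(\widetilde{\cmf}/\IQ)]$. Writing the reflex relation $\cmtr\sqcup\cconj{\cmtr}=\operatorname{Hom}(\cmfr,\IC)$ in group-ring form and using that the CM type is primitive — which is exactly the hypothesis that $\cmf$ is not bicyclic Galois, so that $(\cmfr,\cmtr)$ has reflex $(\cmf,\cmt)$ and the composite operator reduces to $2+\tau(1+\cconjmap)$ with $\cconjmap$ complex conjugation — one obtains in $\Idcmfr$
\[
\typnm\bigl(\typnmr(\mathfrak a)\bigr)=\mathfrak a^{2}\cdot\tau\bigl(\mathfrak a\,\cconj{\mathfrak a}\bigr),
\]
where $\tau\in\Gal(\widetilde{\cmf}/\IQ)$ is chosen so that it carries ideals of $\cmfor$ to ideals of $\cmfr$. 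The left-hand side equals $\typnm((\mu))=(\typnm(\mu))$ with $\typnm(\mu)=\prod_{\phi\in\cmt}\phi(\mu)\equiv 1$ modulo $m$, hence is trivial in $\Clcmfr[m]$; on the right, $\tau(\mathfrak a\,\cconj{\mathfrak a})=\tau((\beta))=(\tau\beta)$ with $\tau\beta\in\cmfr$ and $\tau\beta\equiv 1$ modulo $m$, hence also trivial. Therefore $\mathfrak a^{2}$ is principal with a generator $\equiv 1$ modulo $m$, i.e.\ $[\mathfrak a]^{2}=1$, proving that $U_0\cap U_{\mathrm{CM}}$ has exponent at most $2$ for every positive integer $m$.

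The main obstacle is the crux identity. Establishing it rigorously requires careful group-ring bookkeeping for the reflex type norm (the passage between $\Idcmfr$, $\Idcmf$ and the class group of $\widetilde{\cmf}$ via extension and norm maps) and a check that the leftover operator is exactly $\tau(1+\cconjmap)$, a single complex-conjugation term, rather than a norm to a proper real subfield. This last point is precisely where primitivity enters: for a bicyclic Galois quartic the analogous operator is $2(1+\tau')$ with $\tau'$ a \emph{real} automorphism, so the argument collapses and the conclusion genuinely fails. The remaining care lies in tracking the congruence conditions modulo $m$ through $\typnm$ and through $\tau$, and in invoking the Main Theorem of complex multiplication in its ray-class form to pin down $U_{\mathrm{CM}}$; once the identity is in hand, the cyclic ($C_4$) and non-Galois ($D_4$) cases are handled uniformly.
\end{sketch}
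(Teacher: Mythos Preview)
The paper does not supply its own proof of this statement; it is quoted directly as \cite[Theorem I.10.3]{streng2010} (itself a refinement of \cite[Theorem 2]{shimura1962}) and used as a black box. Your sketch is essentially the argument found in those references: translate the compositum into the intersection of congruence subgroups of $\Clcmfr[m]$ --- exactly the paper's $\ker\effone\cap\ker\efftwo$ from \S\ref{sec:decisionprob} --- and then show this intersection is $2$-torsion via the double-type-norm identity $\typnm\circ\typnmr = 2 + \tau(1+\cconjmap)$ in the group ring of the Galois closure, which holds precisely when the CM type is primitive (the cyclic and $D_4$ cases).

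Two small remarks. The intermediate observation $[\mathfrak a]^2=[\mathfrak a/\cconj{\mathfrak a}]$ on $U_0$ is correct but plays no role; the squaring is killed entirely by the double type norm. And your phrasing that $\tau$ is ``chosen so that it carries ideals of $\cmfor$ to ideals of $\cmfr$'' is slightly off: $\tau$ is forced by the group-ring computation (e.g.\ $\tau=\sigma$ in the $C_4$ case), and the genuine content of the step you flag as the main obstacle is verifying that the resulting ideal descends from $\Id{\galcl}$ back to $\Idcmfr$. In Streng's write-up this is done by carrying out the whole computation in $\Id{\galcl}$ and checking $\Gal(\galcl/\cmfr)$-invariance at the end; once that is in place, your tracking of the congruence $\equiv 1\bmod m$ through $\typnm$ and through $\tau$ is exactly right.
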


There is also analogous result \cite[Theorem I.10.5]{streng2010} similar to the above theorem when $K$ is bicyclic Galois.

\Cref{thm:shimuracontainmentbelow} tells us that the field $\Hcmfr[1]$ is obtained by adjoining square roots of elements from the compositum $\XiKrPhir[1]$.
One can determine which square roots must be added using a generic algorithm
given by Kummer theory.
However, the Kummer theory algorithm requires
the computation of ray class groups of the compositum $\XiKrPhir[1]$.
Algorithms to compute class groups are known to not perform very well \cite[page 3]{belabas2004} for large-degree number fields.
We avoid this issue by instead computing a larger field containing the Hilbert class field $\Hcmfr[1]$ and using Galois theory to eventually find a defining polynomial for $\Hcmfr[1]$. Our method only involves computing ray class groups of number fields of degree at most $4$.

Suppose that $m$ is an integer and that $\cmfcmt$ and $\cmfrcmtr$ are as in the assumptions of \Cref{thm:shimuracontainmentbelow}. We denote by \eqstar{m} the expression
\begin{equation} \label{eq:star} \labeleqstar %
\Hcmfr(1) \subseteq \XiKrPhir[m] \tag{\ensuremath{\star_m}}.
\end{equation}

\begin{cor}[label={cor:main1}]
\newcommand{\qcmf}{F}
	Let $\cmfcmt$ be a CM pair, with $\cmf$ being a quartic CM field which is not bicyclic Galois, and let $\cmfrcmtr$ be its reflex CM pair. Let $\mS$ be as in \Cref{thm:main1}.
	Then \eqstar{{\mS}} holds.
\end{cor}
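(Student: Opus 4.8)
The plan is to apply \Cref{thm:main1} to the reflex field $\cmfr$ in place of $\cmf$, and to combine the containment it produces with \Cref{thm:shimuracontainmentbelow}. Since $\cmf$ is a quartic CM field, its reflex $\cmfr$ is again a CM field and in particular a number field without real embeddings, so \Cref{thm:main1} is available for $\cmfr$ as soon as we exhibit an admissible finite set of primes; and the assumption that $\cmf$ is a quartic CM field which is not bicyclic Galois is exactly the hypothesis needed for \Cref{thm:shimuracontainmentbelow} to apply to the pair $\cmfrcmtr$.

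First I would choose a finite set $\primeset$ of prime ideals of $\Ocmfr$ satisfying the three conditions of \Cref{thm:main1}: include all (finitely many) primes of $\Ocmfr$ lying above $2$; adjoin finitely many further primes whose ideal classes generate $\Clcmfr[1]$, so that $|\Clcmfr[1]/\langle \primeset \rangle| = 1$ is odd; and, if necessary, adjoin one or two more primes so that $|\primeset| \ge 3$. This is possible because $\Clcmfr[1]$ is finite and there are infinitely many primes to draw from. Building $\PS$ and $\mS$ from this $\primeset$ as in \Cref{thm:main1}, that theorem gives $\Hcmfr(1) \subseteq E_{\cmfr}(\mS)$, where by definition $E_{\cmfr}(\mS)$ is the smallest subfield of $\Hcmfr(\mS)$ containing $\cmfr$ for which $\Gal(\Hcmfr(\mS)/E_{\cmfr}(\mS))$ has exponent at most $2$. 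On the other side, \Cref{thm:shimuracontainmentbelow} applied with $m = \mS$ tells us that $\XiKrPhir[\mS]$ is an intermediate field $\cmfr \subseteq \XiKrPhir[\mS] \subseteq \Hcmfr(\mS)$ — the inclusion $\cmfr \subseteq \CMcmfrcmtr[\mS] \subseteq \XiKrPhir[\mS]$ being recalled in \Cref{sec:intro} — for which $\Gal(\Hcmfr(\mS)/\XiKrPhir[\mS])$ is abelian of exponent at most $2$. Thus $\XiKrPhir[\mS]$ belongs to the family of subfields $E$ of $\Hcmfr(\mS)$ with $\cmfr \subseteq E$ and $\Gal(\Hcmfr(\mS)/E)$ of exponent at most $2$, and since $E_{\cmfr}(\mS)$ is by definition the smallest member of this family, $E_{\cmfr}(\mS) \subseteq \XiKrPhir[\mS]$. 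Chaining the two inclusions yields $\Hcmfr(1) \subseteq E_{\cmfr}(\mS) \subseteq \XiKrPhir[\mS]$, which is precisely \eqstar{\mS}.

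I do not expect a serious obstacle: the argument is a concatenation of \Cref{thm:main1} for $\cmfr$ and \Cref{thm:shimuracontainmentbelow}, glued together by the minimality in the definition of $E_{\cmfr}(\mS)$. The two points that deserve a line of justification are the existence of the set $\primeset$ (immediate from finiteness of the class group) and the chain $\cmfr \subseteq \XiKrPhir[\mS] \subseteq \Hcmfr(\mS)$ — in particular that the ray class field $\Hcmfor(\mS)$ of the totally real subfield $\cmfor$ of $\cmfr$ really does lie inside $\Hcmfr(\mS)$. This is standard class field theory, since $\cmfr/\cmfor$ is totally imaginary over totally real, so the real places of $\cmfor$ become complex in $\cmfr$ and impose no condition on the modulus; this is the only place where any subtlety could arise.
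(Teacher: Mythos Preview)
Your proposal is correct and follows essentially the same approach as the paper: apply \Cref{thm:main1} to the reflex field $\cmfr$ to get $\Hcmfr(1)\subseteq E_{\cmfr}(\mS)$, then use \Cref{thm:shimuracontainmentbelow} together with the minimality defining $E_{\cmfr}(\mS)$ to get $E_{\cmfr}(\mS)\subseteq \XiKrPhir[\mS]$. In fact you are slightly more careful than the paper's printed proof, which inadvertently swaps the two theorem references and writes $K$ where $\cmfr$ is meant; your added remarks on the existence of $\primeset$ and on $\Hcmfor(\mS)\subseteq \Hcmfr(\mS)$ are not strictly needed for the corollary as stated but are harmless clarifications.
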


\begin{proof}
\Cref{thm:shimuracontainmentbelow} gives $$H_K(1) \subseteq E_K(m_S)$$ and \Cref{thm:main1} gives $$E_K(m_S) \subseteq \XiKrPhir[m].$$
\end{proof}

Using this corollary, one can compute 
$$
\XiKrPhir[\mS]
$$
then use Galois theory to finally compute a defining polynomial for the subfield we are interested in, the Hilbert class field $\Hcmfr[1]$.

This article is divided into several sections. In \Cref{sec:prelims}, we define objects and recall results from class field theory and CM theory that we use for the rest of the article. In \Cref{sec:mainthm}, we prove \Cref{thm:main1}. In \Cref{sec:decisionprob}, we give an algorithm that tells us whether or not $\eqstar{m}$ holds for any integer $m$. %

In \Cref{subsec:algoimpl}, we discuss how to obtain, in the case where $\eqstar{2}$ holds, a set $\pmb{\beta}$ of algebraic numbers such that $\Hcmfr[1] = \cmfr(\pmb{\beta})$. In that section, we also discuss how our implementation of this algorithm fares against the generic Kummer theory algorithm.

One may note that the formula for $\mS$ in \Cref{cor:main1} implies that $\mS \geq 8$. An algorithm to compute a defining polynomial for $\CMcmfrcmtr[m] / \cmfr$ where $m > 2$ will be the topic of a future article.

\section{Preliminaries}
\label{sec:prelims}
\writefilename

In this section, we define the mathematical objects informally introduced in \Cref{sec:intro}.

\Cref{subsec:cft} is dedicated to reviewing class field theory and the ray class field $\Hnf(m)$ of a number field $\nf$.

\Cref{subsec:cm} reviews the theory of complex multiplication, CM theory. There, we define CM fields, their reflexes, type norms and the field $\CMcmfrcmtr[m]$ referenced in this article's introduction.

  \subsection{Class Field Theory}
  \label{subsec:cft}
\writefilename

This section is concerned with defining class field theory concepts needed for this article. For a more thorough treatment of class field theory, see~\cite{neukirch1999}.

\renewcommand{\nf}{F}
Let $\nf$ be a number field and denote by $\Onf$ its ring of integers. 

A \textit{fractional ideal} of $\Onf$ is an $\Onf$-submodule
$\frakn$ of $\nf$
such that there exists a $d \in \IZ$ such that $d\frakn$ is a nonzero ideal of $\Onf$.
Denote by $\Idnf$ the group of fractional ideals of $\Onf$
and denote by $\Prnf$ its subgroup of fractional ideals generated
by a single element.
The quotient $\Idnf / \Prnf$ forms a finite group which we call
\textit{the ideal class group} of $\nf$, denoted by $\Clnf$.

\begin{tmpvar} \newcommand{\PrIdSet}{\calP}%
Let $\PrIdSet$ be the union of the set of prime ideals of $\Onf$ (finite places) and the set of real embeddings and conjugate pairs of complex embeddings of $\nf$ (infinite places).
A \textit{modulus} $\mdls$ for $\nf$ is a function
$\mdlsfun: \PrIdSet \to \IZnn$
such that $\mdlsfun[\frakp] = 0$
for all but finitely many prime ideals $\frakp$, and
$\mdlsfun[\sigma] \leq 1$ when $\sigma$ is a real embedding of $\nf$,
and $\mdlsfun[\sigma] = 0$ when $\sigma$ is a complex embedding
of $\nf$. If $\nf$ is totally imaginary,
meaning that is has no real embeddings,
then the map
$$
\fraka
\hspace{1em}
\mapsto 
\hspace{1em}
\left(
\hspace{0.5em}
{\renewcommand{\mdls}{\frakm_\fraka}
\mdlsfun}
:\,
\frakp \mapsto \begin{cases}
\ord_\frakp(\fraka) & \frakp \text{ is a finite place} \\
0 & \frakp \text{ is an infinite place} \\
\end{cases}
\hspace{0.5em}
\right)
$$
is a bijection
from the set of nonzero ideals of $\Onf$
to the set of moduli on $\nf$.
In such a case, we may interchangeably use
the terms `modulus' and `ideal of $\Onf$'.

For a modulus $\mdls$,
we denote by $\Idnf[\mdls]$ the subgroup of $\Idnf$
composed of the fractional prime ideals $\frakp$ of $\Onf$
which satisfy $\mdlsfun[\frakp] = 0$.
We define $\nfmone$ to be the set of $a \in \nf^\times$
such that
$\ord_\frakp(a - 1) \geq \mdlsfun[\frakp]$
for all finite primes $\frakp$ with $\mdlsfun[\frakp] \geq 0$
and $\sigma(a) > 0$ for any $\sigma$ such that $\mdlsfun[\sigma] = 1$.
The statement `$a \in \nfmone$'
is more commonly denoted as $a \equiv \onemodstar \mdls$.
The latter notation takes preference in this article.
Denote by $\Prnf[\mdls]$ the subgroup of $\Idnf[\mdls]$
generated by fractional ideals of $\Onf$
generated by elements of $\nfmone$.
The quotient $\Idnf[\mdls] / \Prnf[\mdls]$ is a finite group which we call
\textit{the ray class group} of $\nf$ for the modulus $\mdls$, denoted by $\Clnf[\mdls]$.

One of the main results of class field theory
is: given a modulus $\mdls$,
there exists a finite abelian extension $\Hnf[\mdls] / \nf$, called the \textit{ray class field} of $\nf$
for the modulus $\mdls$,
which satisfies
\begin{itemize}
\item $\Hnf[\mdls]$ is unramified at all primes $v \in \PrIdSet$ with $\mdlsfun[v] > 0$,
\item $\Clnf[\mdls]$ and $\Gal(\Hnf[\mdls]/\nf)$ are isomorphic via the Artin map \cite[Theorem VI.5.5]{neukirch1999}.
\end{itemize}
\end{tmpvar}

A subgroup $\csbgp$ of $\Idnf[\mdls]$ such that $\Prnf[\mdls] \subseteq H$
is called \textit{a congruence subgroup} modulo $\frakm$.
Let $\frakm$ be a modulus for $\nf$.
Galois theory gives a bijection between the set of
congruence subgroups
modulo $\frakm$
and the abelian extensions $\nfext$ of $\nf$ such that $\nfup \subseteq \Hnf[\mdls]$.
In particular, if $\csbgp$ is a congruence subgroup modulo
$\frakm$ which corresponds to the abelian extension $\nfup$ of $\nf$, 
contained in $\Hnf[\mdls]$,
then
$$
\Gal(\Hnf[\mdls] / \nfup) \, \cong \, \csbgp/\Prnf[\mdls]
\qquad
\text{and}
\qquad
\Gal( \nfup / \nf ) \, \cong \, \Idnf[\mdls]/\csbgp.
$$

\begin{tmpvar} \renewcommand{\mdls}{\frakn}
If $n \in \IZpos$,
and $\mdls(\frakp) = \ord_\frakp(n)$ for all prime ideals of $\Onf$
and $\mdls(\sigma) = 0$ for all real embeddings of $\nf$,
then we may write $\Idnf[n], \Prnf[n], \Clnf[n]$ and $\Hnf[n]$
instead of $\Idnf[\mdls], \Prnf[\mdls]$, $\Clnf[\mdls]$ and $\Hnf[\mdls]$.
With this notation,
$\Hnf = \Hnf[1]$ is \textit{the Hilbert class field} of $\nf$. 
On the other hand,
if $\mdls(\frakp) = \ord_\frakp(n)$ for all prime ideals of $\Onf$
and $\mdls(\sigma) = 1$ for all real embeddings of $\Onf$,
then we may write $\Idnf[n][+], \Prnf[n][+], \Clnf[n][+]$ and $\Hnf[n][+]$
instead of $\Idnf[\mdls], \Prnf[\mdls]$, $\Clnf[\mdls]$ and $\Hnf[\mdls]$.
With this notation,
$\Hnf^+ = \Hnf[1][+]$ is \textit{the narrow Hilbert class field} of $\nf$.
\end{tmpvar}

  \subsection{CM Theory}
  \label{subsec:cm}
\writefilename

In this section, we set notation and
recall results of complex multiplication (CM) theory
that we use to define $\CMcmfrcmtr[m]$
and other objects introduced in later chapters.
For a more complete treatment of CM theory,
the reader is referred to \cite{shimura2016,streng2010}.

\renewcommand{\cmf}{K}%
A \textit{CM field} $\cmf$ is a totally imaginary number field
which is a quadratic extension of a totally real field $\cmfo$.
The degree of a CM field is its degree as a number field.
Hence, any CM field has even degree.
For the rest of \Cref{subsec:cm}, we fix a CM field $\cmf$ of degree $2g$.

Denote by $\cconjmap$ the sole generator
of the group $\Gal(\cmf/\cmfo)$, a group of order $2$.
This can be thought of as the \textit{complex conjugation} morphism
because for any embedding $\phi: \cmf \hookrightarrow \IC$,
we have $\phi(\cconjmap(x)) = \cconj{\phi(x)}$.

Let $\galcl$ be the Galois closure of the CM field $\cmf$,
and fix an embedding $\embtoC: \galcl \to \IC$.
The CM field $\cmf$ has $g$ complex conjugate pairs of embeddings into $\IC$.
By applying the `inverse' of $\embtoC$ to each embedding $\sigma: \cmf \to \IC$, we can think of these $2g$
complex embeddings as embeddings into $\galcl$.
A set $\Phi$ is called a \textit{CM type} of $\nf$ if it contains $g$ complex embeddings of $\nf$ into $\nfup$ such that for any $\phi, \phi' \in \Phi$, $\phi' \neq \phi \circ \cconjmap$.

Let $\cmf_2/\cmf_1$ be an extension of CM fields,
with $\galcl_2$ the Galois closure of $\cmf_2$.
Let $\Phi_1$ be a CM type on $\cmf_1$.
The CM type \textit{induced by $\Phi_1$ on $\cmf_2$} is defined to be
$$
\Phi_2= \{ \phi: \cmf_2 \hookrightarrow \galcl_2 : \phi|_{\cmf_1} \in \Phi_1 \}.
$$
A CM type of $\cmf$ is said to be \textit{primitive} if it is not induced
from a CM type of a strict CM subfield of $\cmf$.
Two CM types of $\Phi, \Phi'$ are \textit{equivalent} if there is an automorphism $\sigma$ of $\cmf$ such that $\Phi' = \Phi\sigma$ holds.

We call a pair $\cmfcmt$ \textit{a CM pair}.
If $\Phi$ is a primitive CM type on $K$, we say that a CM pair $\cmfcmt$ is \textit{primitive}.
Denote by $\Phi_\galcl$ the CM type induced by $\Phi$ on the Galois closure $\galcl$,
which is also a CM field.
Since the elements of $\Phi_\galcl$ are automorphisms of $L$,
we can define the set $\Phi_\galcl^{-1} = \{ \phi^{-1} : \phi \in \Phi_\galcl \}$.
This set $\Phi_\galcl^{-1}$ is a CM type of $\galcl$.
There exists a unique subfield $\cmfr$ of $\galcl$
and a unique CM type $\Phi^r$ on $\cmfr$
such that $\Phi^r$ is a primitive CM-type
which induces $\Phi_\galcl^{-1}$.
The CM pair $\cmfrcmtr$ is called the \textit{reflex} of $\cmt$.
One property of the reflex field $\cmfr$ is that
\begin{equation} \label{eq:reflexfixedfield}
\Gal(\galcl/\cmfr) = \{ \sigma \in \Gal(\galcl/\IQ) : \sigma\Phi = \Phi \}.
\end{equation}
If $\Phi$ is a primitive CM type of $\cmf$,
then the reflex $({\cmfr}^r, {\Phi^r}^r)$ of $\cmtr$ is actually equal to $\cmfcmt$.

Let $\cmfcmt$ be a quartic CM pair and let $\galcl$ be its Galois closure.
There are three possibilities for the Galois group $G = \Gal(\galcl/\IQ)$: $G \cong C_2 \times C_2$, $G \cong C_4$ or $G \cong D_4$.
For the second and third possibilities, $\cmt$ is a primitive CM type,
regardless of the choice of $\Phi$.

Let $\cmfcmt$ be a primitive CM pair
and let $\cmfrcmtr$ be its reflex.
Then we have a map $\typnmr: \cmfr \to \galcl$
defined by $y \mapsto \prod_{\phi^r \in \Phi^r} \phi^r(y)$.
By \Cref{eq:reflexfixedfield},
this is in fact a map to the reflex $\cmfr^r = \cmf$ of $\cmfr$.
We refer to this map as the (reflex) \textit{type norm} map.
Some articles refer to it as a \textit{half norm} map
since it uses half the number of embeddings as the usual norm map.

Similar to the usual norm map,
the type norm map induces maps
$$\typnmr: \Idcmfr[m] \to \Idcmf[m] \quad\,\quad \text{and} \quad\,\quad \typnmr: \Clcmfr[m] \to \Clcmf[m]$$
for any positive integer $m$
using \cite[Lemma I.8.3]{streng2010}, which uses \cite[Remark on page 63]{lang1983} and \cite[Proposition 29]{shimura1998}
in its proof.
The notation $\typnmr$ will be used to denote
any of the above three maps and the domain
will be specified whenever the context of the discussion
does not make it clear.

Keeping the notation from the previous paragraph,
let $\mdls$ be an ideal of $\Ocmf$ and
denote by $m$ the smallest positive integer in $\mdls$.
Define the subgroup $\Idcmfrcmtr[\mdls]$ of $\Idcmfr[m]$ to be
\[
\Idcmfrcmtr[\mdls]
=
\left\{
\fraka \in \Idcmfr[m]
:
\begin{array}{rcl}
\exists x \in {\cmf}^\times \text{ such that}&& \\
N_{\Phi^r}(\fraka) &=& x\Ocmf \\
N_{K^r/\IQ}(\fraka) &=& x\cconj{x} \\
x &\equiv& \onemodstar \mdls\\
\end{array}
\right\}.
\]

Noticing that $N_{\cmfr/\IQ}(x) = \typnmr(x)\cconj{\typnmr(x)}$
for every $x \in \cmfr$, we find that $\Idcmfrcmtr[\mdls]$ is a congruence subgroup modulo $m$.
As a congruence subgroup,
this corresponds to a field extension $\CMcmfrcmtr[\mdls]$ of $\cmfr$
contained in $\Hcmfr[m]$.



\section{An integer \texorpdfstring{$m$}{m} for which \texorpdfstring{$\ref{eq:star}$}{(*m)} holds.}
\label{sec:mainthm}
\writefilename


The aim of this section is to prove \Cref{thm:main1}, which gives a formula to find an integer $m$ such that \eqstar{m} holds. In \Cref{subsec:embprob}, we discuss embedding problems, which we use in \Cref{subsec:embprobbounds} to prove \Cref{thm:main1}.


  \subsection{Embedding problems.}
  \label{subsec:embprob}
\writefilename

We state a result of Richter used in Shimura's original proof \cite[Proof of Theorem 2]{shimura1962}. 

\begin{lem} \label{lem:globemb}
Let $a$ be a non-negative integer. Let $\nf$ be a totally imaginary number field. Let $\nfext$ be an unramified cyclic Galois extension of degree $2^a$. Then there exists a cyclic Galois extension $\nfextbig$ of degree $2^{a+1}$ which contains $\nfup$.
\end{lem}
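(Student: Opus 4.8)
The plan is to build $M$ by hand as a quadratic extension $L(\sqrt{\alpha})$ of $L$, with $\alpha$ chosen so that the resulting extension $M/K$ is not merely Galois of degree $2^{a+1}$ but in fact \emph{cyclic}. The case $a=0$ is trivial ($L=K$, and $K$ certainly has a quadratic extension), so assume $a\geq 1$ and let $\sigma$ generate $\Gal(L/K)\cong\IZ/2^a\IZ$. The classical recipe I would follow is: first produce an element $\beta\in L^\times$ with $N_{L/K}(\beta)=-1$; then, since $N_{L/K}(\beta^2)=1$, Hilbert's Theorem~90 gives $\alpha\in L^\times$ with $\sigma(\alpha)=\alpha\beta^2$; and finally set $M=L(\sqrt{\alpha})$.

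Granting such a $\beta$, I would then check three things. (i) $\alpha\notin(L^\times)^2$: if $\alpha=\gamma^2$ then $\sigma(\gamma)=\pm\gamma\beta$, and applying $N_{L/K}$, using that $2^a$ is even, forces $N_{L/K}(\gamma)=-N_{L/K}(\gamma)$, which is absurd; hence $[M:L]=2$ and $[M:K]=2^{a+1}$. (ii) $M/K$ is Galois: iterating $\sigma(\alpha)=\alpha\beta^2$ shows every $K$-conjugate $\sigma^i(\alpha)$ of $\alpha$ lies in $\alpha(L^\times)^2$, so every $K$-conjugate of $\sqrt{\alpha}$ lies in $M$. (iii) $\Gal(M/K)$ is cyclic: lift $\sigma$ to $\tilde\sigma\in\Gal(M/K)$; from $\tilde\sigma(\sqrt{\alpha})^2=\sigma(\alpha)=(\beta\sqrt{\alpha})^2$ we get $\tilde\sigma(\sqrt{\alpha})=\pm\beta\sqrt{\alpha}$, and a short induction gives $\tilde\sigma^{2^a}(\sqrt{\alpha})=(\pm 1)^{2^a}\,N_{L/K}(\beta)\,\sqrt{\alpha}=-\sqrt{\alpha}$. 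Thus $\tilde\sigma^{2^a}$ is the nontrivial element of $\Gal(M/L)$, so $\tilde\sigma$ has order $2^{a+1}$ and generates $\Gal(M/K)$. Since $L\subseteq M$, this $M$ is the extension we want.

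The one substantive step is the existence of $\beta\in L^\times$ with $N_{L/K}(\beta)=-1$, and this is where all the hypotheses are used. Since $L/K$ is cyclic, the Hasse norm theorem reduces this to showing that $-1$ is a local norm at every place $v$ of $K$. Because $K$ is totally imaginary, every archimedean place of $K$ is complex, so $L_w=K_v=\IC$ and the local norm is the identity --- nothing to check. At a finite place $v$, the local extension $L_w/K_v$ is unramified (as $L/K$ is unramified), hence $N_{L_w/K_v}(L_w^\times)\supseteq\mathcal{O}_{K_v}^\times\ni -1$. So $-1$ is everywhere a local norm, and therefore a global norm, which produces $\beta$.

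The main obstacle, modest as it is, is item (iii): making sure the group of order $2^{a+1}$ is $\IZ/2^{a+1}\IZ$ rather than $\IZ/2^a\IZ\times\IZ/2\IZ$. This is settled precisely by the computation $\tilde\sigma^{2^a}(\sqrt{\alpha})=N_{L/K}(\beta)\sqrt{\alpha}$, which is the reason we demand $N_{L/K}(\beta)=-1$ rather than an arbitrary norm; everything else is bookkeeping with Galois theory, Hilbert~90, and the Hasse norm theorem. (The same argument can be phrased cohomologically: the obstruction to lifting the character $G_K^{\mathrm{ab}}\to\IZ/2^a\IZ$ cutting out $L$ to a character valued in $\IZ/2^{a+1}\IZ$ lives in $H^2(G_K,\IZ/2\IZ)$, and the very same local inputs --- triviality at the complex places, unramifiedness at the finite ones --- kill its image in each $H^2(G_{K_v},\IZ/2\IZ)$, hence kill it globally via the injection into the Brauer group.)
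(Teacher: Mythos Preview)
Your proof is correct and takes a genuinely different route from the paper's. The paper argues abstractly via embedding problems: it invokes Richter's local criterion (\cite[Satz~6]{richter1936}) to solve the local embedding problem $1\to C_2\to C_{2^{a+1}}\to C_{2^a}\to 1$ at every finite place (using unramifiedness) and notes the archimedean local problems are trivial (using total imaginariness), then appeals to Richter's local-global principle (\cite[Satz~9]{richter1936}) to conclude. You instead construct $M$ explicitly as a Kummer extension $L(\sqrt{\alpha})$, reducing everything to the single arithmetic input that $-1\in N_{L/K}(L^\times)$, which you verify by the Hasse norm theorem using the \emph{same} local data (complex places give trivial local extensions; unramified finite places give surjective unit norms). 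Your approach is more elementary and self-contained --- it uses only Hilbert~90 and the Hasse norm theorem for cyclic extensions, avoiding the embedding-problem machinery entirely --- and it has the pleasant feature of actually exhibiting $M$. The paper's approach, on the other hand, fits the lemma into a broader framework that it reuses later (notably in combination with Crespo's \Cref{thm:crespo6} to control ramification of the solution $M$); your parenthetical cohomological remark at the end is essentially the bridge between the two viewpoints.
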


Even though \Cref{lem:globemb} is a special case of \cite[Satz 1b]{richter1936na}, we will prove it to keep this article self-contained. The proof concerns \textit{embedding problems}, which we define in this section.

Let $\cgquo$ and $\cgnor$ be groups.
A \textit{central group extension of $\cgquo$ by $\cgnor$} is an exact sequence
\begin{equation} \label{eq:nhq}
1 \to \cgnor \too{\iota} \cgext \too{\pi} \cgquo \to 1
\end{equation}
such that
$\iota(\cgnor)$ is in the center of $\cgext$.

\begin{defn}
By an \textit{embedding problem}, we will mean a pair $\embprob$ where $\nfext$ is a Galois extension and $\vareps$ is a central group extension given by an exact sequence
$$
1 \to \cgnor \too{\iota} \cgext \too{\pi} \cgquo \to 1
$$
where $G = \galgp[\nfext]$. A \textit{solution} to such an embedding problem is a Galois extension $\nfextbig$ containing $\nfup$ such that there exists an isomorphism $\phi: \galgp[\nfextbig] \to \cgext$ which induces a commutative diagram
\begin{center}
\begin{tikzpicture}[every node/.style={midway}, baseline=(current  bounding  box.center)]
\matrix[column sep={3em},
row sep={3em}] at (0,0)
{  ;
\node(A1) {$1$}; &
\node(A2) {$\galgp[\nfextup]$}; & 
\node(A3) {$\galgp[\nfextbig]$}; &
\node(A4) {$G$}; & 
\node(A5) {$1$}; \\
\node(B1) {$1$}; &
\node(B2) {$\cgnor$}; & 
\node(B3) {$\cgext$}; &
\node(B4) {$\cgquo$}; & 
\node(B5) {$1$.}; \\
};
\draw[->] (A1) -- (A2) node {};
\draw[->] (A2) -- (A3) node {};
\draw[->] (A3) -- (A4) node {};
\draw[->] (A4) -- (A5) node {};
\draw[->] (B1) -- (B2) node {};
\draw[->] (B2) -- (B3) node[anchor=south] {$\iota$};
\draw[->] (B3) -- (B4) node[anchor=south] {$\pi$};
\draw[->] (B4) -- (B5) node {};
\draw[->] (A2) -- (B2) node[anchor=east] {};
\draw[->] (A3) -- (B3) node[anchor=east] {$\phi$};
\draw[->] (A4) -- (B4) node[anchor=east] {$\id_G$};
\end{tikzpicture}
\end{center}
\end{defn}

If the fields $\nf$ and $\nfup$ are global fields, such as number fields, then we call it a \textit{global embedding problem}.

Let $a$ be a non-negative integer. Let $\nfext$ be a cyclic extension of degree $2^a$. Denote $\galgp$ by $G$. Consider any central group extension of the form
$$
1 \to C_2 \to C_{2^{a+1}} \to \cgquo \to 1,
$$
and denote it by $\vareps_2$. Note that $\vareps_2$ is unique up to non-unique isomorphism.

\begin{ex}
For example, take $\nfext$ to be $\IQ(\sqrt{5})/\IQ$ and denote its Galois group by~$\grp$.
\begin{enumerate}
\item The embedding problem $\embprob[2]$ has $\IQ(\zeta_5)$ as a solution.
\item The embedding problem $\embprob$ in which $\vareps$ is of the form $$1~\to~C_2~\to~C_2~\times~C_2~\to~G~\to~1$$ has $\IQ(\sqrt{5}, i)$ as a solution. \qedhere
\end{enumerate}
\end{ex}

A global embedding problem $\embprob$ has one or more associated local embedding problems for each place of $L$ as follows.

\begin{defn}
Let $\embprob$ be a global embedding problem where $\vareps$ is an exact sequence $1 \to \cgnor \too{\iota} \cgext \too{\pi} \cgquo \to 1$.
Let $\plup$ be a place of $\nfup$ over a place $\pl$ of $\nf$
and denote by $\widetilde{\cgquo}$ the decomposition group $\decgp$,
which is the Galois group of $\lfext$.
Let $\widetilde{\cgext}$ be a subgroup of $\cgext$ such that
\begin{equation} \label{eq:relation}
\pi(\widetilde{\cgext}) = \widetilde{\cgquo}.
\end{equation}
Let $\widetilde{\cgnor} = \iota^{-1}({\widetilde\cgext})$ and denote by $\widetilde{\vareps}$ the following exact sequence
$$
1 \to \widetilde{\cgnor} \too{\iota} \widetilde{\cgext} \too{\pi} \widetilde{\cgquo} \to 1.
$$
Then $(\lfext, \widetilde\vareps)$ is the \textit{local embedding problem induced by the global embedding problem $\embprob$ with respect to the place $\plup$ and the subgroup $\widetilde{\cgext}$ of $\cgext$}.
\end{defn}

The following lemma gives a sufficient condition to conclude that a global embedding problem has no solution.

\begin{lem}[name={Richter, \cite[Satz 5]{richter1936}}]
If a global embedding problem $\embprob$ is solvable, then for each place $\plup$ of $\nfup$ there exists a subgroup $\widetilde\cgext$ of $\cgext$ such that the local embedding problem with respect to $\plup$ and $\widetilde\cgext$ is solvable.
\end{lem}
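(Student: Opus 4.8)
The plan is to produce the local solutions by restricting a global solution to decomposition groups. Suppose $\nfextbig$ is a solution of the global embedding problem $\embprob$, and fix an isomorphism $\phi\colon\galgp[\nfextbig]\to\cgext$ realising the commutative diagram from the definition of a solution. From the commutativity together with exactness of the rows, $\pi\circ\phi$ coincides with the restriction homomorphism $\galgp[\nfextbig]\to\galgp=\cgquo$, and $\phi$ carries the normal subgroup $\galgp[\nfextup]=\ker\bigl(\galgp[\nfextbig]\to\galgp\bigr)$ onto $\iota(\cgnor)=\ker\pi$. Now fix a place $\plup$ of $\nfup$ lying over the place $\pl$ of $\nf$, and choose any place $\widetilde{\plup}$ of $\nfupup$ above $\plup$ (one exists since $\nfextup$ is finite). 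Let $D\subseteq\galgp[\nfextbig]$ be the decomposition group of $\widetilde{\plup}$ over $\pl$, and set $\widetilde{\cgext}:=\phi(D)\subseteq\cgext$.

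First I would check that $\widetilde{\cgext}$ satisfies the defining relation \eqref{eq:relation}, namely $\pi(\widetilde{\cgext})=\widetilde{\cgquo}$, so that the induced local embedding problem $(\lfext,\widetilde{\vareps})$ is defined. Since $\pi\circ\phi$ is restriction to $\nfup$, the group $\pi(\widetilde{\cgext})$ is the image of $D$ under $\galgp[\nfextbig]\to\galgp$; by the standard compatibility of decomposition groups with subextensions, this image is the decomposition group of $\plup$ over $\pl$ inside $\galgp$, which is $\decgp=\widetilde{\cgquo}=\Gal(\lfext)$. Hence $(\lfext,\widetilde{\vareps})$, with $\widetilde{\cgnor}=\iota^{-1}(\widetilde{\cgext})$, is defined.

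Next I would exhibit the completion of $\nfupup$ at $\widetilde{\plup}$, which I denote $\lfupup$, as a solution of $(\lfext,\widetilde{\vareps})$. This is a Galois extension of $\lf$ containing $\lfup$, with $\Gal(\lfextbig)$ canonically identified with $D$, so $\phi$ restricts to an isomorphism $\Gal(\lfextbig)\to\widetilde{\cgext}$. Under this identification the quotient map $\Gal(\lfextbig)\to\Gal(\lfext)$ corresponds to $\pi|_{\widetilde{\cgext}}$ (both being restriction maps compatible with the global one), and the subgroup $\Gal(\lfextup)=D\cap\galgp[\nfextup]$ — the decomposition group of $\widetilde{\plup}$ over $\plup$ — is carried onto $\widetilde{\cgext}\cap\iota(\cgnor)=\iota(\widetilde{\cgnor})$, using $\phi(\galgp[\nfextup])=\iota(\cgnor)$ and injectivity of $\phi$. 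Thus $\phi$ identifies the exact sequence
\[
1\to\Gal(\lfextup)\to\Gal(\lfextbig)\to\Gal(\lfext)\to1
\]
with $\widetilde{\vareps}$, compatibly with the identity on $\widetilde{\cgquo}$, which is precisely the statement that $\lfextbig$ solves $(\lfext,\widetilde{\vareps})$; this proves the lemma.

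The argument is essentially formal once the dictionary between completions and decomposition groups is in hand, so I do not anticipate a genuine obstacle. The only point requiring care is the identity $\iota(\widetilde{\cgnor})=\phi\bigl(D\cap\galgp[\nfextup]\bigr)$, i.e.\ that intersecting the decomposition group of $\widetilde{\plup}$ over $\pl$ with the subgroup fixing $\nfup$ returns the decomposition group of $\widetilde{\plup}$ over $\plup$ — a standard fact to be invoked uniformly at the finite places and at the archimedean places, where the relevant local extensions lie among $\IC/\IC$, $\IC/\IR$, $\IR/\IR$ with decomposition groups of order $1$ or $2$.
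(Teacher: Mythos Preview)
Your argument is correct and is the standard one: localise a global solution $\nfupup$ at a chosen place $\widetilde{\plup}$ above $\plup$, take $\widetilde{\cgext}=\phi(D)$ with $D$ the decomposition group of $\widetilde{\plup}$ in $\galgp[\nfextbig]$, and read off the local solution from the usual identification of $\Gal(\lfextbig)$ with $D$. The verifications you flag (surjectivity of $D$ onto $\decgp$ under restriction, and $D\cap\galgp[\nfextup]$ being the decomposition group of $\widetilde{\plup}$ over $\plup$) are exactly the right ones and hold uniformly at all places.

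As for comparison with the paper: the paper does not actually prove this lemma. It is stated with attribution to Richter \cite[Satz 5]{richter1936} and immediately followed by examples; no proof environment appears. So there is nothing to compare against beyond noting that your write-up supplies the natural decomposition-group argument that the paper omits.
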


\begin{ex}
Let $\nfext$ be $\IQ(\sqrt{5})/\IQ$ with Galois group $\galgp$. Consider the embedding problem $\embprob[2]$. Here $\vareps_2$ is the exact sequence $1 \to C_2 \to C_4 \to C_2 \to 1$. Let $\plup$ a real place of $\IQ(\sqrt{5})$ over the unique (real) archimedean place $\pl$ of $\IQ$. Note that $\lfup = \IR$ and $\lf = \IR$ and the decomposition group $\decgp = \widetilde{\cgquo}$ is trivial. The subgroups of $C_4$ which satisfy \eqref{eq:relation} are exactly the trivial group and the unique subgroup of order $2$.
\begin{enumerate}
  \item The field $M = \IR$ is a solution to the local embedding problem induced by the global embedding problem $\embprob[2]$ with respect to the place $\plup$ and the trivial subgroup of $C_4$ since $\Gal(M/\IR) \cong 1$.
  \item The field $M = \IC$ is a solution to the local embedding problem induced by the global embedding problem $\embprob[2]$ with respect to the place $\plup$ and the unique subgroup $C_2$ of order $2$ of $\cgext$ since $\Gal(M/\IR) \cong C_2$. \qedhere
\end{enumerate}
\end{ex}

\begin{ex} \label{ex:locnonsolvable}
Let $\nfext$ be $\IQ(\sqrt{-5})/\IQ$. Let $\plup$ be the complex place of $\nfup$, which is above the unique (real) archimedean place $\pl$ of $\IQ$. The decomposition group $\decgp$ in this case is of order $2$. Consider the embedding problem $\embprob[2]$ where $\vareps_2$ is the exact sequence $1 \to C_2 \to C_4 \to C_2 \to 1$. Take $\widetilde\cgext = \cgext = C_4$, and note that this is the only subgroup of $\cgext$ which satisfies \eqref{eq:relation}. There does not exist a number field $M'$ such that $\Gal(M'/\IR) \cong \widetilde\cgext = C_4$. So, this induced local embedding problem is not solvable. Moreover, since $C_4$ is the only subgroup of $\cgext$ satisfying \eqref{eq:relation}, this is the only induced local embedding problem and hence all induced local problems are not solvable. As all valid candidates of $\widetilde{\cgext}$ result in a local problem which is not solvable, then there does not exist a cyclic field extension of $\IQ$ of degree $4$ which contains $\IQ(\sqrt{-5})$.
\end{ex}
\begin{ex}\label{ex:locinf}
If $\nf$ has no real embeddings, then all its archimedean places are complex and hence $\widetilde{G}$ is always trivial. In this case, taking the trivial group is the only valid choice for $\widetilde{\cgext}$. Hence, for each \textit{archimedean} place $\plup$ of $\nfup$, the global embedding problem $\embprob$ induces a local embedding problem with respect to $\plup$ which is solvable.
\end{ex}

We are mainly interested in the case where $\nfext$ is unramified. The following lemma shows that in this case, for each \textit{nonarchimedean} place $\plup$ of $\nfup$, the global embedding problem $\embprob$ induces a local embedding problem with respect to $\plup$ which is solvable.

\begin{lem}[label={lem:locfin},name={\cite[Satz 6]{richter1936}}]
Let $\ell, m, n, u$ be positive integers. Let $K$ be a nonarchimedean local field of characteristic~$0$ with unique prime ideal $\frakp$. Suppose that $K$ contains the $\ell^u$-th roots of unity, but not all $\ell^{u+1}$st roots of unity. Let $L$ be a cyclic extension of $K$ of degree $\ell^n$. Then, there exists a Galois extension $M$ of $K$ containing $L$ such that $\Gal(M/K) = C_{\ell^{m+n}}$ if and only if at least one of the following is true:
\begin{enumerate}
\item $\frakp$ is unramified in $L$.
\item $\frakp \nmid \ell$, and $u \geq m + s$, where $\ell^s$ is the ramification index of $\frakp$ in $L$
\item $\frakp \mid \ell$, and one of the following is true
\begin{itemize}
\item $u = 0$
\item $u \geq n + m$
\item $0 < u < n + m$ and $\zeta_{\ell^{\min(u, m)}} \in N_{L/K}(L)$.
\end{itemize}
\end{enumerate}
\end{lem}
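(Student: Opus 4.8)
The plan is to recast the existence of $M$ as a divisibility question in the group of continuous finite-order characters of $K^\times$ via local class field theory, and then to read off the three cases from the structure of $K^\times$ as a topological group. Write $X = \mathrm{Hom}_{\mathrm{cont}}(K^\times, \IQ/\IZ)$; since $K$ is a nonarchimedean local field of characteristic $0$, the reciprocity map identifies $X$ with the group of finite-order characters of $\Gal(K^{\mathrm{ab}}/K)$, and cyclic extensions of $\ell$-power degree correspond to characters of $\ell$-power order. Let $\chi_L \in X$ be a character of order $\ell^n$ cutting out $L$. A cyclic $M \supseteq L$ with $\Gal(M/K) \cong C_{\ell^{m+n}}$ corresponds to a character $\chi_M$ of order $\ell^{m+n}$ whose cyclic span contains $\chi_L$; after adjusting by a unit of $\IZ/\ell^{m+n}\IZ$ this is the same as requiring $\chi_L = \ell^m \chi_M$, and an order count shows such a $\chi_M$ automatically has order exactly $\ell^{m+n}$. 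So the first step is to prove: $M$ exists if and only if $\chi_L \in \ell^m X$.

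Second, I would compute $\ell^m X$ from the structure of $K^\times$. Restricting to the $\ell$-primary part, the pro-$\ell$ completion of $K^\times$ is $\IZ_\ell^{\,r} \oplus \IZ/\ell^u\IZ$, where the torsion summand is $\mu_{\ell^u}(K)$ (the hypothesis says precisely that $\mu_{\ell^\infty}(K) = \mu_{\ell^u}$), with $r = 1$ when $\frakp \nmid \ell$ and $r = [K:\IQ_\ell]+1$ when $\frakp \mid \ell$. Since $\IQ_\ell/\IZ_\ell$ is divisible, Pontryagin duality gives that a character is $\ell^m$-divisible in $X$ if and only if it is trivial on the $\ell^m$-torsion subgroup $\mu_{\ell^m}(K) = \langle \zeta_{\ell^{\min(u,m)}}\rangle$ of $K^\times$. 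By reciprocity the kernel of $\chi_L$ on $K^\times$ is $N_{L/K}(L^\times)$, so this step yields the master criterion: $M$ exists if and only if $\zeta_{\ell^{\min(u,m)}} \in N_{L/K}(L)$.

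Third, I would specialise the master criterion to the three cases. If $\frakp$ is unramified in $L$, then $N_{L/K}$ is surjective on units, so every root of unity is a norm and the criterion holds; this is case (1). When $\frakp \mid \ell$, the cases $u = 0$ and $u \ge n+m$ follow by order-counting (if $u = 0$ the group $\mu_{\ell^m}(K)$ is trivial, and if $u \ge n+m$ the restriction of $\chi_L$ to $\mu_{\ell^u}(K)$ has order at most $\ell^n \le \ell^{u-m}$, hence is $\ell^m$-divisible), while the remaining range $0 < u < n+m$ is exactly the stated norm condition; this is case (3). When $\frakp \nmid \ell$ the extension is tame, the image of $\roix{K}$ in $K^\times/N_{L/K}(L^\times) \cong C_{\ell^n}$ is the inertia subgroup of order equal to the ramification index $\ell^s$, and since the $\ell$-part of $\roix{K}$ is $\mu_{\ell^u}$, the restriction $\chi_L|_{\mu_{\ell^u}}$ has order $\ell^s$; the divisibility criterion then reads $\ell^s \le \ell^{\max(u-m,0)}$, i.e. $s = 0$ (case (1)) or $u \ge m+s$ (case (2)).

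The main obstacle is the local arithmetic bookkeeping in the second and third steps: pinning down the pro-$\ell$ structure of $K^\times$ uniformly across the tame and wild cases, and—especially when $\frakp \mid \ell$—tracking how the cyclic group $\mu_{\ell^u}(K)$ sits inside $N_{L/K}(L^\times)$ as $u$, $m$, $n$ vary. The reformulation through reciprocity is what removes the need for explicit norm-residue symbol computations; the delicate point is checking that the order count $\min(u,m)$ produced by divisibility matches the generator $\zeta_{\ell^{\min(u,m)}}$ in the statement, which is exactly where the hypothesis that $K$ contains $\mu_{\ell^u}$ but not $\mu_{\ell^{u+1}}$ is used.
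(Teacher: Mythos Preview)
The paper does not prove this lemma; it is simply quoted as Satz~6 of Richter's 1936 paper and used as a black box in the proof of \Cref{lem:globemb}. So there is no ``paper's own proof'' to compare against.

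Your argument is correct and is the natural modern route. The key reduction---that the existence of $M$ is equivalent to $\chi_L\in\ell^m X$, which by the structure $K^{\times}_{\hat\ell}\cong\IZ_\ell^{\,r}\oplus\IZ/\ell^u\IZ$ and divisibility of $(\IQ_\ell/\IZ_\ell)^r$ becomes the single condition $\zeta_{\ell^{\min(u,m)}}\in N_{L/K}(L^\times)$---is clean and makes the three cases fall out immediately. In the tame case your identification of $\chi_L|_{\mu_{\ell^u}}$ with the inertia character of order $\ell^s$ is exactly right, since for $\frakp\nmid\ell$ the $1$-units are pro-$p$ and contribute nothing to an $\ell$-power character. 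Richter's original argument, written before local class field theory had its present form, proceeds through more explicit norm computations; your reformulation through the reciprocity kernel is what lets you avoid those.

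One cosmetic remark: the statement as printed declares $u$ a \emph{positive} integer but then lists $u=0$ under case~(3). Your treatment correctly covers $u\ge 0$, so this is an inconsistency in the quoted statement rather than a gap in your proof.
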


Finally, we conclude by a lemma stating that a \textit{local-global principle} for our case.

\begin{lem}[label={lem:locglob}]
For any non-negative integer $a$ and any cyclic field extension $\nfext$ of degree $2^a$, the global embedding problem $\embprob$ is solvable if and only if for every place $\plup$ of $\nfup$, the unique induced local embedding problem is solvable.
\end{lem}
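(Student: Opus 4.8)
The plan is to translate the embedding problem $\embprob$ --- in which $\vareps$ is the central extension $\vareps_2\colon 1 \to C_2 \to C_{2^{a+1}} \to G \to 1$ with $G = \galgp$ and last map $\pi$ --- into Galois cohomology and then apply a Hasse principle. The isomorphism class of $\vareps_2$ is an element $\xi \in H^2(G,\IZ/2\IZ)$ (trivial action); inflating $\xi$ along the surjection $\Gal(\overline{\nf}/\nf) \twoheadrightarrow G$ attached to $\nfext$ produces a class $\mathrm{obs} \in H^2(\nf,\IZ/2\IZ)$. The case $a = 0$ is degenerate: then $\nfup = \nf$, and the global problem as well as each induced local problem (the trivial subgroup of $C_2$, by the convention of \Cref{ex:locinf}) merely asks for a quadratic extension of the field in question, which always exists; so assume $a \ge 1$ henceforth.

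First I would show that $\embprob$ is solvable if and only if $\mathrm{obs} = 0$. A solution is the same datum as a continuous surjection $\psi\colon \Gal(\overline{\nf}/\nf) \to C_{2^{a+1}}$ with $\pi \circ \psi$ equal to the fixed quotient onto $G$; by the usual obstruction calculus a \emph{homomorphic} lift $\psi$ --- not surjective a priori --- exists precisely when $\mathrm{obs} = 0$. Such a $\psi$ is automatically surjective: its image is a subgroup of the cyclic group $C_{2^{a+1}}$ surjecting onto $G \cong C_{2^a}$, while the unique index-$2$ subgroup $C_{2^a} \subset C_{2^{a+1}}$ contains $\ker \pi$ (as $a \ge 1$), so $\pi(C_{2^a})$ is a proper subgroup of $G$, forcing the image of $\psi$ to be all of $C_{2^{a+1}}$.

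Next I would localize the obstruction. For a place $\plup \mid \pl$, the map $\Gal(\overline{\lf}/\lf) \to G$ factors through the decomposition group $\widetilde G = \decgp$, so $\mathrm{res}_\pl(\mathrm{obs})$ is the inflation to $\lf$ of $\xi|_{\widetilde G}$ --- equivalently, the class of the central extension $1 \to C_2 \to \pi^{-1}(\widetilde G) \to \widetilde G \to 1$. When $\widetilde G \ne 1$, this extension with $\widetilde{\cgext} := \pi^{-1}(\widetilde G)$ is exactly the induced local embedding problem at $\plup$; when $\widetilde G = 1$ the distinguished induced problem has $\widetilde{\cgext} = 1$, as in \Cref{ex:locinf}, hence is split and solvable, consistently with $\mathrm{res}_\pl(\mathrm{obs}) = 0$. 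A decomposition group of a cyclic extension is cyclic, so $\widetilde G$ and $\pi^{-1}(\widetilde G)$ are cyclic, and the argument of the previous paragraph applied to $\pi^{-1}(\widetilde G) \to \widetilde G$ shows that the induced local problem at $\plup$ is solvable if and only if $\mathrm{res}_\pl(\mathrm{obs}) = 0$ in $H^2(\lf,\IZ/2\IZ)$. (For instance, at a real place ramified in $\nfup$ one gets $\widetilde G = C_2$ and $\widetilde{\cgext} = C_4$, unsolvable since no $M$ has $\Gal(M/\IR) \cong C_4$; correspondingly $\mathrm{res}_\pl(\mathrm{obs})$ is the nonzero element of $H^2(\IR,\IZ/2\IZ)$, compare \Cref{ex:locnonsolvable}.)

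Finally I would invoke the Hasse principle: the localization map $H^2(\nf,\IZ/2\IZ) \to \bigoplus_\pl H^2(\lf,\IZ/2\IZ)$ is injective. This is the $2$-torsion part of the Albert--Brauer--Hasse--Noether injectivity $\mathrm{Br}(\nf) \hookrightarrow \bigoplus_\pl \mathrm{Br}(\lf)$; equivalently, by Poitou--Tate duality together with $(\IZ/2\IZ)^\vee \cong \mu_2 \cong \IZ/2\IZ$, it is the classical statement that an element of $\nf^\times$ which is a square in every completion is already a square (no nontrivial finite extension of $\nf$ splits at every place, by Chebotarev). Chaining the equivalences: $\embprob$ is solvable if and only if $\mathrm{obs} = 0$, if and only if $\mathrm{res}_\pl(\mathrm{obs}) = 0$ for every place $\pl$ of $\nf$, if and only if the induced local problem is solvable at every place $\plup$ of $\nfup$ --- which is the claim; the ``only if'' half is also Richter's \cite[Satz 5]{richter1936}, once the induced local problem is known to be unique. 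I expect the main obstacle to be the bookkeeping of this localization step: checking that the paper's combinatorially defined local problem --- with its chosen subgroup $\widetilde{\cgext}$ of $\cgext$ --- genuinely carries obstruction class $\mathrm{res}_\pl(\mathrm{obs})$ uniformly over split, inert, ramified, archimedean, and nonarchimedean places, and pinning down which $\widetilde{\cgext}$ is ``the'' induced local problem (the minimal admissible one: $\pi^{-1}(\widetilde G)$ if $\widetilde G \ne 1$ and $\{1\}$ if $\widetilde G = 1$, as \Cref{ex:locinf} and \Cref{ex:locnonsolvable} take it). The remaining ingredients are the elementary surjectivity observation about $C_{2^{a+1}}$ and the standard Hasse principle.
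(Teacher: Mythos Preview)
Your proof is correct and takes a genuinely different route from the paper's. The paper gives no argument of its own: it simply records that the lemma is the special case $\ell = 2$, $m = 1$, $n = a$ of Richter's \cite[Satz 9]{richter1936}, after noting that Richter's auxiliary hypothesis $B(2)$ is vacuous in this situation. You instead rederive the local--global principle from scratch in cohomological language: you identify the obstruction class $\mathrm{obs} \in H^2(\nf,\IZ/2\IZ)$ as the inflation of the extension class of $\vareps_2$, argue that its vanishing is equivalent to solvability (using the pleasant automatic-surjectivity feature of lifts into cyclic $2$-groups), match the local restrictions $\mathrm{res}_\pl(\mathrm{obs})$ with the obstructions of the induced local problems, and finish with Albert--Brauer--Hasse--Noether for $\Br(\nf)[2]$. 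This is essentially the modern repackaging of what Richter proved by hand in 1936. Your version is longer but self-contained and does not send the reader to a somewhat inaccessible reference; the paper's version is a one-line citation. You also correctly flag the mild imprecision in the word ``unique'': when $\widetilde G = 1$ both $\widetilde E = 1$ and $\widetilde E = C_2$ satisfy $\pi(\widetilde E) = \widetilde G$, so the induced local problem is not literally unique, but both choices are solvable over any local field of characteristic~$0$ (except $\IC$ with $\widetilde E = C_2$, which is why the paper privileges $\widetilde E = 1$ in \Cref{ex:locinf}), so the ambiguity is harmless for the lemma as used.
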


\begin{proof}
This is a special case of \cite[Satz 9]{richter1936} obtained by substituting $\ell, m$, and $n$ with $2, 1$, and $a$ respectively and noticing that the condition $B(2)$, defined in \cite[Definition 3]{richter1936}, is trivially satisfied.
\end{proof}

Finally, we end this subsection with a proof of \Cref{lem:globemb}.

\begin{proof}[label={prf:globemb},name={of \Cref{lem:globemb}}]
We are interested in the solvability of the embedding problem $\embprob$ where $\vareps$ is of the form $1 \to C_2 \to C_{2^{a+1}} \to G \to 1$, where $G = \galgp = C_{2^a}$. If we show that the global embedding problem is solvable, then we will have proven the lemma.
Since $\nf$ has no real embeddings, each archimedean place has a local embedding problem which is solvable thanks to \Cref{ex:locinf}. Now, since $\nfext$ is unramified, we may use \Cref{lem:locfin} to show that each nonarchimedean place has a local embedding problem which is solvable. Finally, using \Cref{lem:locglob}, we find that since each place of $\nf$ has an induced local embedding problem which is solvable, then the global embedding problem is solvable.
\end{proof}

  \subsection{Towards an explicit \texorpdfstring{$m$}{m}.}
  \label{subsec:embprobbounds}
\writefilename

\renewcommand{\nf}{K}%


The following result of Crespo is one of the key ingredients in the proof of our main result.

\begin{thm}[label={thm:crespo6},name={\cite[Theorem 6]{crespo1989}}]
Let $\nfext$ be a Galois extension of a number field $\nf$, unramified outside a finite set $\primeset$ of prime ideals of the ring of integers $\Onf$ of $\nf$. Let $n$ be a positive integer, $G = \galgp$, and $A$ an abelian group of exponent $n$. Assume $\primeset$ contains the prime ideals dividing $n$. For each prime number $p$ dividing $n$, we denote by $a_p$ the $p$-rank of $A$, by $r_p$ the $p$-rank of $\Hom(G, A)$ and let $\delta_p = 0$ if $K$ contains a primitive $p^{v_p(n)}$-th root of unity and $\delta_p = 1$ if it does not. Suppose that
\begin{enumerate}
  \item the order $h_\primeset$ of the $S$-class group is coprime to $n$, and
  \item for every prime number $p \mid n$, we have $r_p + a_p + \delta_p < \#\primeset$.
\end{enumerate}
Then every solvable embedding problem $\embprob$, where $\vareps$ is a central group extension of $G$ by $A$, has a solution $\nfupup$ such that $\nfextbig$ is unramified outside $\primeset$.
\end{thm}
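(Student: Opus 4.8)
The plan is to reduce the theorem to a Grunwald--Wang style construction of an abelian character with prescribed local behaviour, exploiting that solvability is \emph{already} assumed so that only the $H^1$-layer of the embedding problem remains. First I would fix, using the solvability hypothesis, a proper solution $M_0/K$, i.e.\ a surjection $\psi_0 \colon \Gal(\cconj{K}/K) \twoheadrightarrow E$ lifting the quotient map $\phi \colon \Gal(\cconj{K}/K) \twoheadrightarrow G$ attached to $L/K$. Since $\vareps$ is a \emph{central} extension, $A$ is a trivial Galois module and any two lifts of $\phi$ differ by a continuous homomorphism $\chi \colon \Gal(\cconj{K}/K) \to A$; that is, the set of solutions is a torsor under $\Hom(\Gal(\cconj{K}/K), A)$, with $\psi_0 \cdot \chi$ again a lift. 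The whole problem thus becomes: find $\chi$ such that the twisted lift $\psi_0\chi$ is (a) still surjective, and (b) unramified outside $\primeset$. Because $A = \bigoplus_{p \mid n} A_p$ splits into its $p$-primary parts and the class of $\vareps$ in $H^2(G,A)$ splits accordingly, I would treat each prime $p \mid n$ separately and recombine at the end; this is what makes the per-prime hypotheses (1)--(2) the natural ones.

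The second step turns requirement (b) into an explicit local condition. For a prime $v \notin \primeset$ the extension $L/K$ is unramified, so $\phi$ kills the inertia group $I_v$; hence $\psi_0(I_v) \subseteq \ker \pi = A$, and likewise $\chi(I_v) \subseteq A$. Writing $A$ additively, $\psi_0\chi$ is unramified at $v$ exactly when $\chi|_{I_v} = -\,\psi_0|_{I_v}$. As $\psi_0|_{I_v}$ is nontrivial only for the finitely many $v$ in some set $T$ disjoint from $\primeset$ (the primes outside $\primeset$ where $M_0$ ramifies), requirement (b) amounts to prescribing $\chi|_{I_v}$ at each $v \in T$, demanding $\chi$ unramified at all $v \notin \primeset \cup T$, and leaving $\chi$ free at the places of $\primeset$. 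This is precisely a Grunwald--Wang problem: the existence of a global character of $A$ with assigned inertial restrictions at finitely many places and controlled ramification elsewhere. Via the Artin map I would phrase it on the idele class group, where hypothesis (1), $h_\primeset$ coprime to $n$, guarantees that the $p$-part of the $\primeset$-class group imposes no obstruction in the relevant local-to-global exact sequence, so the specifications at $T$ can be realised globally by a character allowed to ramify only within $\primeset \cup T$.

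The third step is the counting argument that produces a single admissible $\chi$, and is where hypothesis (2) is used in its sharp form. Working $p$-adically, the $\mathbb{F}_p$-dimension of characters that may ramify inside $\primeset$ (after the $\primeset$-class group has been removed by (1)) grows with $\#\primeset$, while the constraints are of three kinds: fixing the inertia at $T$ costs at most $a_p$ (the $p$-rank of $A$), the Grunwald--Wang special case for $p=2$ contributes the defect $\delta_p$, and the twists $\chi = c \circ \phi$ factoring through $G$ --- an $\mathbb{F}_p$-space of dimension $r_p$, the $p$-rank of $\Hom(G,A)$ --- must be quotiented out, both because they do not change the isomorphism class of the solution and because they govern whether $\psi_0\chi$ stays \emph{surjective} rather than dropping to an improper solution. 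The inequality $r_p + a_p + \delta_p < \#\primeset$ makes the available dimension strictly exceed the number of constraints for every $p \mid n$, so for each $p$ a valid $p$-component of $\chi$ exists; assembling these over the primes dividing $n$ yields $\chi \in \Hom(\Gal(\cconj{K}/K), A)$ for which $\psi_0\chi$ is a surjective lift unramified outside $\primeset$, i.e.\ the desired solution $M = M_\chi$.

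The hard part will be the third step: simultaneously meeting the local ramification conditions from (b) and preserving surjectivity, while correctly accounting for the Grunwald--Wang special case. Concretely, one must verify that the ``bad'' twists to be avoided (those making $\psi_0\chi$ non-surjective, and those obstructed by the $p=2$ special case) are genuinely cut out by a subspace of $\mathbb{F}_p$-codimension at least $r_p + a_p + \delta_p$ inside the space of $\primeset$-admissible characters, so that hypothesis (2) leaves a nonempty complement. Making this dimension bookkeeping precise --- identifying the exact exact sequence, pinning down the contribution of each place of $\primeset$, and handling $\delta_p$ through the interplay of $N_{L/K}$-norms and roots of unity as in \Cref{lem:locfin} --- is the technical heart, whereas the cohomological reduction and the translation to inertia conditions are formal.
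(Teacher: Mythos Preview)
The paper does not contain a proof of this statement: it is quoted verbatim as \cite[Theorem~6]{crespo1989} and used as a black box in the proof of \Cref{lem:globembcond}. There is therefore no ``paper's own proof'' to compare against.

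That said, your outline is broadly the right shape for how such results are proved, and is in the spirit of Crespo's original argument: start from an arbitrary solution $M_0$ (which exists by the solvability hypothesis), observe that the set of solutions is a torsor under $\Hom(G_K,A)$ because $\vareps$ is central, and then search for a twist $\chi$ that kills the unwanted ramification. Your identification of the local condition at $v\notin\primeset$ --- namely $\chi|_{I_v}=-\psi_0|_{I_v}$ --- is correct, and the reduction to $p$-primary components is legitimate.

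Where your sketch is still loose is exactly where you flag it. The sentence ``fixing the inertia at $T$ costs at most $a_p$'' is not right as stated: the set $T$ of bad primes of $M_0$ can be arbitrarily large, so the naive cost is $a_p\cdot\#T$, not $a_p$. The actual mechanism is different: hypothesis~(1) lets you first replace $M_0$ by a twist that is unramified outside $\primeset$ \emph{except possibly wildly}, or equivalently lets you trade each bad prime in $T$ for ramification supported in $\primeset$, using that the $\primeset$-units surject onto the relevant quotient when $h_\primeset$ is prime to $n$. Only after this reduction does the counting $r_p+a_p+\delta_p<\#\primeset$ become the correct bookkeeping, with $\#\primeset$ measuring the $\IF_p$-dimension of characters ramified only in $\primeset$ (via the $\primeset$-unit theorem and Kummer theory over $K(\zeta_{p^{v_p(n)}})$, which is where $\delta_p$ enters). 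If you tighten step~3 along these lines you will have a genuine proof; as written, the dimension count does not yet close.
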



Given a finite abelian extension $\nfext$,
we denote its conductor, as defined in \cite[Chapter 2]{cohen2000}, by $\cond{\nfext}$.
One key property of the conductor that we use is that it is the minimal modulus $\frakm$ such that $\Hnf[\frakm] \supseteq \nfup$.

\begin{lem}\label{lem:globembcond}
Let $a$ be a non-negative integer. Let $\nf$ be a number field with no real embeddings.
Let $\nfext$ be an unramified cyclic Galois extension of degree $2^a$. 
Let $\primeset$ be a finite set of prime ideals of $\nf$ such that
\begin{itemize}
\item $| \Cl_\nf(1) / \langle \primeset \rangle |$ is odd,
\item $\primeset$ contains all prime ideals above $2$,
\item $\primeset$ contains at least $3$ elements.
\end{itemize}
Then there exists a cyclic Galois extension $\nfextbig$ of degree $2^{a+1}$, unramified outside $\primeset$, containing $\nfup$.
\end{lem}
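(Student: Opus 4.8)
The plan is to deduce Lemma~\ref{lem:globembcond} by combining Lemma~\ref{lem:globemb}, which already produces \emph{some} cyclic extension of degree $2^{a+1}$ containing $L$ but with no control on ramification, with Crespo's Theorem~\ref{thm:crespo6}, which is exactly the device that lets one replace a solution of an embedding problem by one whose ramification is confined to a prescribed finite set of primes.

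First I would phrase the situation as an embedding problem. Put $G=\Gal(L/K)\cong C_{2^{a}}$, take $A=C_2$, and let $\varepsilon_2$ denote the central extension $1\to C_2\to C_{2^{a+1}}\to G\to 1$ (central since $C_{2^{a+1}}$ is abelian, and unique up to non-unique isomorphism). Since $K$ has no real embeddings it is totally imaginary, so Lemma~\ref{lem:globemb} applies to the unramified cyclic extension $L/K$ of degree $2^{a}$ and yields a cyclic Galois extension $M'/K$ of degree $2^{a+1}$ with $L\subseteq M'$. Reading off the exact sequence $1\to\Gal(M'/L)\to\Gal(M'/K)\to\Gal(L/K)\to 1$ exhibits $M'$ as a solution of the embedding problem $(L/K,\varepsilon_2)$; in particular that embedding problem is solvable.

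Next I would verify the hypotheses of Theorem~\ref{thm:crespo6} with $n=2$, so that $p=2$ is the only prime to consider. The extension $L/K$ is unramified, hence a fortiori unramified outside $\primeset$, and by hypothesis $\primeset$ contains the prime ideals dividing $n=2$. Condition~(1) holds because the $\primeset$-class group is $\Cl_K(1)/\langle\primeset\rangle$, which has odd order, hence order coprime to $n=2$. For condition~(2): the $2$-rank $a_2$ of $A=C_2$ equals $1$; the $2$-rank $r_2$ of $\Hom(G,C_2)$ is at most $1$ (it is $1$ if $a\geq 1$ and $0$ if $a=0$); and $\delta_2=0$ since $K$ contains $-1=\zeta_2$, a primitive $2^{v_2(2)}$-th root of unity. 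Hence $r_2+a_2+\delta_2\leq 2<3\leq\#\primeset$, so both conditions are satisfied.

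Theorem~\ref{thm:crespo6} then furnishes a solution $M$ of $(L/K,\varepsilon_2)$ with $M/K$ unramified outside $\primeset$; by the definition of a solution, $\Gal(M/K)\cong C_{2^{a+1}}$ (compatibly with the surjection onto $G$) and $L\subseteq M$, so $M/K$ is the desired cyclic Galois extension of degree $2^{a+1}$, unramified outside $\primeset$, containing $L$, and the argument is uniform in $a$ (including the degenerate case $a=0$, where $L=K$). I do not anticipate a genuine obstacle here: all the real content is sealed inside Lemma~\ref{lem:globemb} and Theorem~\ref{thm:crespo6}, and the only points requiring care are matching the invariants $a_p,r_p,\delta_p$ of Crespo's statement to the case at hand and observing that his ``$\primeset$-class group'' is precisely $\Cl_K(1)/\langle\primeset\rangle$.
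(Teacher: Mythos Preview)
Your proposal is correct and follows exactly the same route as the paper: show via Lemma~\ref{lem:globemb} that the embedding problem $(L/K,\varepsilon_2)$ is solvable, then feed this into Crespo's Theorem~\ref{thm:crespo6} with $n=2$, $A=C_2$, computing $a_2=1$, $r_2\le 1$, $\delta_2=0$ so that $r_2+a_2+\delta_2<\#\primeset$. If anything you are slightly more careful than the paper, which asserts $r_2=1$ without singling out the degenerate case $a=0$.
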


\begin{proof}
\Cref{lem:globemb} shows that the embedding problem $\embprob[2]$ is solvable. Keeping the notation of \Cref{thm:crespo6}, the $2$-rank $a_2$ of $\cgnor = C_2$ for this embedding problem is $1$. Moreover, $\Hom(\galgp, C_2) \cong C_2$ and hence $r_2 = 1$. Finally $\delta_2 = 0$ since $\nf$ contains the second roots of unity. Using \Cref{thm:crespo6}, we prove the lemma.
\end{proof}


Denote by $\reld{\nfext}$ the relative discriminant ideal of a field extension $\nfext$, as defined in \cite[Chapter 2, Section 2.4]{cohen2000} and in \cite[Section III.2.8]{neukirch1999}.

We now state the following lemma.

\begin{lem}[label={lem:cohen3321},name={Cohen, \cite[Proposition 3.3.21]{cohen2000}}]
Let $\nfext$ be an abelian extension of degree $n$ such that $\nfup \subseteq \Hnf[\frakm]$ for some modulus $\frakm$. Let $\frakp$ be a prime ideal of $\calO_K$ such that $\cond{\nfext}(\frakp) \neq 0$. Finally, let $\ell$ be the prime number below $\frakp$.
\begin{enumerate}
  \item If $\ell \nmid n$, then $\cond{\nfext}(\frakp) = 1$.
  \item If $\text{gcd}(n, N_\nfext(\frakp)-1) = 1$ and $n$ is a power of $\ell$, then $\cond{\nfext}(\frakp) \geq 2$.
\end{enumerate}
\end{lem}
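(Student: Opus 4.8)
The plan is to reduce the claim to a purely local statement about the completion at $\frakp$ and then read it off from local class field theory. First I would fix a prime $\mathfrak{P}$ of $\nfup$ above $\frakp$ and pass to the completion $\nfup_{\mathfrak{P}}/\nf_{\frakp}$; since $\nfext$ is abelian (being contained in the ray class field $\Hnf[\frakm]$), this local extension, and hence its conductor exponent, is independent of the choice of $\mathfrak{P}$, and the global conductor localises, so that $\cond{\nfext}(\frakp)$ equals the conductor exponent $c$ of $\nfup_{\mathfrak{P}}/\nf_{\frakp}$. I would then recall the standard description of $c$: it is the least integer $j \geq 0$ with $1 + \frakp^{j}\calO_{\nf_{\frakp}} \subseteq N_{\nfup_{\mathfrak{P}}/\nf_{\frakp}}(\nfup_{\mathfrak{P}}^{\times})$, where for $j = 0$ this set is read as $\calO_{\nf_{\frakp}}^{\times}$; consequently $c = 0$ exactly when $\frakp$ is unramified, $c = 1$ exactly when $\frakp$ ramifies tamely, and $c \geq 2$ exactly when $\frakp$ ramifies wildly. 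The hypothesis $\cond{\nfext}(\frakp) \neq 0$ thus says precisely that $\frakp$ ramifies in $\nfext$, so $c \geq 1$; write $e > 1$ for its ramification index and $d$ for the local degree $[\nfup_{\mathfrak{P}} : \nf_{\frakp}]$, a divisor of $n$ with $e \mid d$.

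Granting this, both parts are short divisibility arguments about $e$. For part 1, $\ell \nmid n$ gives $\ell \nmid d$, hence $\ell \nmid e$, so $\frakp$ is tamely ramified and $c \leq 1$; combined with $c \geq 1$ this yields $\cond{\nfext}(\frakp) = 1$. For part 2, if $n$ is a power of $\ell$ then so is its divisor $d$, which is moreover $> 1$ since $\frakp$ ramifies; hence $e \mid d$ together with $e > 1$ forces $\ell \mid e$, i.e.\ $\frakp$ ramifies wildly, so $\cond{\nfext}(\frakp) \geq 2$. One can also reach part 2 through the gcd hypothesis: were the ramification tame, the Frobenius would act on the cyclic tame inertia group by raising to the $N_{\nfext}(\frakp)$-th power, yet abelianness of $\nfext$ forces this action to be trivial, giving $e \mid N_{\nfext}(\frakp) - 1$; together with $\ell \mid e$ this would contradict $\gcd(n, N_{\nfext}(\frakp) - 1) = 1$.

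I do not anticipate a real obstacle here: the only non-formal ingredients are the identification of $\cond{\nfext}(\frakp)$ with the conductor exponent of the completed local extension, and the dictionary relating conductor exponent $0$, $1$, $\geq 2$ to unramified, tame, and wild ramification, and both are textbook consequences of local class field theory (see, e.g., \cite{neukirch1999}). Since this lemma is precisely \cite[Proposition 3.3.21]{cohen2000}, one may in any case simply cite it. If a self-contained argument is wanted, the tame case can be made explicit from $1 + \frakp\calO_{\nf_{\frakp}} \subseteq N_{\nfup_{\mathfrak{P}}/\nf_{\frakp}}(\nfup_{\mathfrak{P}}^{\times})$, which realises $e$ as a divisor of $\left|\calO_{\nf_{\frakp}}^{\times}/(1 + \frakp\calO_{\nf_{\frakp}})\right| = N_{\nfext}(\frakp) - 1$, a quantity prime to $\ell$ because $N_{\nfext}(\frakp)$ is a power of $\ell$.
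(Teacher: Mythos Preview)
The paper does not give its own proof of this lemma: it is stated with attribution to Cohen \cite[Proposition~3.3.21]{cohen2000} and used as a black box. Your argument via local class field theory --- identifying $\cond{\nfext}(\frakp)$ with the local conductor exponent and using the dictionary $c=0$/$c=1$/$c\geq 2$ $\Leftrightarrow$ unramified/tame/wild --- is correct and is essentially how one would prove Cohen's proposition from scratch.

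One remark on part~2: your main argument already shows that the hypothesis ``$n$ is a power of $\ell$'' alone forces wild ramification (since $e\mid d\mid n$ and $e>1$ give $\ell\mid e$), so the $\gcd$ condition is in fact redundant for the conclusion as stated. Your alternative route through the $\gcd$ hypothesis is also fine, though as written it borrows $\ell\mid e$ from the main argument; a self-contained version would observe that tameness in an abelian extension gives $e\mid N_{\nfext}(\frakp)-1$, and since also $e\mid n$, one gets $e\mid\gcd(n,N_{\nfext}(\frakp)-1)=1$, contradicting $e>1$.
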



From \Cref{lem:cohen3321}, we conclude that since $2$ is the only prime divisor of $[\nfupup : \nfup]$, with $\nfupup, \nfup$ as in \Cref{lem:globembcond}, then for a prime ideal $\frakP$ of $\nfup$ not above $2$, we have $v_\frakP(\cond{\nfextup}) \leq 1$.

Using \cite[Corollary 10.1.24]{cohen2000} gives us the bound $\cond{\nfextup}(\frakP_2)~\leq~2e(\frakP_2/2)~+~1$ for any $\frakP_2$ above $2$, where $e(\frakP_2/2)$ is the ramification index of $\frakP_2$ over $2$.
To summarize, for any prime ideal $\frakP$ of $\Onfup$, we have:
\begin{equation} \label{eq:bdoncondf}
\cond{\nfextup}(\frakP) \leq \begin{cases}
2e(\frakP/2) + 1 & \frakP \mid 2 \\
1 & \frakP \nmid 2. \\
\end{cases}
\end{equation}
\Cref{prop:valbound}, below, enables us to bound the valuation of $\cond{\nfextbig}$ at the primes $\frakp$ of $\Onf$ using the bounds on the valuations of $\cond{\nfextup}$ at the primes $\frakP$ of $\Onfup$.
\begin{prop} \label{prop:valbound}
Let $K$ be a number field and let $L$ be an unramified extension of $K$ of degree $2^a$. Let $M$ be a cyclic extension of $K$ of degree $2^{a+1}$ which contains $L$. Let $\frakp$ be an ideal of $\Onf$. Let $c$ be an integer and suppose $\cond{\nfextup}(\frakP) \leq c$ for every prime ideal $\frakP$ of $\Onfup$ above $\frakp$. Then
\[
\cond{\nfextbig}(\frakp) \leq c.
\]
\end{prop}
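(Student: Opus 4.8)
The plan is to reduce the proposition to a computation in local fields at $\frakp$ and then exploit that $L/K$ is unramified. First note that, since $M/K$ is cyclic of degree $2^{a+1}$ and $[L:K]=2^a$, the field $L$ is the unique subextension of $M/K$ of degree $2^a$, so $\Gal(M/L)$ is the unique subgroup of order $2$ of $\Gal(M/K)$; in particular $L/K$ is Galois, $M/L$ has degree $2$ (hence is abelian), and both conductors $\cond{\nfextbig}$ and $\cond{\nfextup}$ make sense. The value $\cond{\nfextbig}(\frakp)$ is computed locally: it equals the conductor exponent of the local extension $M_{\mathfrak{Q}}/K_\frakp$ for any prime $\mathfrak{Q}$ of $M$ above $\frakp$, independently of $\mathfrak{Q}$ since $M/K$ is abelian; likewise $\cond{\nfextup}(\frakP)$ equals the conductor exponent of $M_{\mathfrak{Q}}/L_\frakP$ for $\frakP=\mathfrak{Q}\cap L$. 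Thus it suffices to show that these two local conductor exponents coincide, since they are then both $\le c$.

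Fix $\frakP\mid\frakp$ and $\mathfrak{Q}\mid\frakP$, and put $G=\Gal(M_{\mathfrak{Q}}/K_\frakp)$ and $H=\Gal(M_{\mathfrak{Q}}/L_\frakP)\subseteq G$, both cyclic. The crucial observation is that $L_\frakP/K_\frakp$ is unramified (because $L/K$ is), so $L_\frakP$ lies in the maximal unramified subextension of $M_{\mathfrak{Q}}/K_\frakp$; hence the inertia subgroup $G_0$ of $G$ satisfies $G_0\subseteq H$, and since the lower-numbering ramification groups descend, $G_i\subseteq G_0\subseteq H$ for all $i\ge 0$, whence $H_i=H\cap G_i=G_i$ and $H_0=G_0$. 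So $G$ and $H$ carry the same ramification filtration. For a cyclic local extension the conductor exponent is the Artin conductor of a faithful character, which by Serre's formula equals $\sum_{i:\,G_i\neq 1}|G_i|/|G_0|$ — a faithful character is nontrivial on each $G_i\neq 1$, and no character can exceed this because the chain is nested — so applying this to both $G$ and $H$ and using $H_i=G_i$, $H_0=G_0$ gives equal exponents (when $H$ is trivial both are $0$, and then $G_0=1$ as well). Therefore $\cond{\nfextbig}(\frakp)=\cond{\nfextup}(\frakP)\le c$, which is in fact slightly stronger than what is claimed.

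The step I expect to be the main nuisance is the bookkeeping for the conductor exponent: one has to fix the normalisation of the Artin conductor and check that, for a cyclic local extension, the conductor of the extension — the least common multiple of the conductors of all characters of its Galois group — is attained by a faithful character, so that the nested-chain comparison applies. An alternative that avoids most of this is to combine the conductor–discriminant formula for the cyclic extension $M/K$ with the discriminant tower identity $\reld{\nfextbig}=\reld{\nfext}^{[M:L]}\,N_{L/K}(\reld{\nfextup})$ and the fact that $\reld{\nfext}=(1)$ (as $L/K$ is unramified), together with the observation that every proper subextension of $M/K$ lies in $L$ — it corresponds to a subgroup containing the unique subgroup of order $2$ — and is hence unramified over $K$; this forces every non-faithful character of $\Gal(M/K)$ to have trivial $\frakp$-conductor, and matching the two resulting expressions for $\cond{\nfextbig}(\frakp)$ again yields $\cond{\nfextbig}(\frakp)=\cond{\nfextup}(\frakP)$. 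Either way, the essential point is that since $L/K$ is unramified, all the ramification of $M/K$ already takes place in $M/L$.
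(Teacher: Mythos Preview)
Your proposal is correct, and in fact proves the sharper statement $\cond{\nfextbig}(\frakp)=\cond{\nfextup}(\frakP)$ rather than merely the inequality.

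Your primary argument, via the lower-numbering ramification filtration, is a genuinely different route from the paper's. The paper works globally: it combines the discriminant tower formula $\reld{\nfextbig}=\reld{\nfext}^{[M:L]}\,N_{L/K}(\reld{\nfextup})$ with the conductor--discriminant formula to obtain $\cond{\nfextbig}^{2^a}=N_{L/K}(\cond{\nfextup})$, then takes $\frakp$-valuations and bounds $\sum_{\frakP\mid\frakp} f(\frakP/\frakp)\,\ord_\frakP(\cond{\nfextup})$ by $2^a c$. Your local argument is shorter and more conceptual: once you observe $G_0\subseteq H$ you get $H_i=G_i$ for all $i\ge 0$ and the two conductor exponents coincide immediately. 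It also explains \emph{why} one should expect equality, whereas the paper's computation only yields an inequality (it would give equality too if one noted that all $\ord_\frakP(\cond{\nfextup})$ are equal by $\Gal(L/K)$-conjugacy, but the paper does not do this).

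Your second, sketched approach \emph{is} the paper's argument, but you are more careful at one point: the identity $\reld{\nfextbig}=\cond{\nfextbig}^{2^a}$ is not a generic instance of the conductor--discriminant formula, and you correctly supply the missing justification, namely that every non-faithful character of $\Gal(M/K)$ factors through $\Gal(L/K)$ (since $\Gal(M/L)$ is the unique minimal nontrivial subgroup) and hence has trivial conductor because $L/K$ is unramified, while the $2^a$ faithful characters all share the conductor $\cond{\nfextbig}$. The paper invokes this identity without comment.
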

\begin{proof}
Note that
$$
\ord_\frakp(N_{\nfext}(\cond{\nfextup}))
=
\sum_{\frakP|\frakp} \ord_\frakp(N_{\nfext}(\frakP)) \cdot \ord_{\frakP}(\cond{\nfextup}),
$$
where $\sum_{\frakP | \frakp}$ denotes a sum that runs through all primes $\frakP$ over $\frakp$.
Using the assumption that $\cond{\nfextup}(\frakP) \leq c$ for every prime ideal $\frakP$ of $\Onfup$ above $\frakp$, we get
$$
\ord_\frakp(N_{\nfext}(\cond{\nfextup}))
\leq
c \cdot \sum_{\frakP|\frakp} \ord_\frakp(N_{\nfext}(\frakP)).
$$
Let $g$ be the number of prime ideals $\frakP$ of $\Onfup$ above $\frakp$. For each of these $g$ prime ideals, the norm $N_{\nfext}(\frakP)$ is given by the residue class degree $f = [\Onfup / \frakP : \Onf / \frakp ]$. Hence, we have $\sum_{\frakP|\frakp} \ord_\frakp(N_{\nfext}(\frakP))
=
fg.$
Now, since $\nfext$ is unramified, we have $2^a = [\nfup : \nf] = fg$. Corollary III.2.10 of \cite{neukirch1999} states that for a tower of fields $\nf \subseteq \nfup \subseteq \nfupup$ one has
\begin{equation} \label{eq:part1}
\reld{\nfextbig}
=
\reld{\nfext}^{[M:L]}
N_\nfext(\reld{\nfext}).
\end{equation}
The conductor-discriminant formula \cite[Section VII.11.9]{neukirch1999} gives us
\begin{equation} \label{eq:part2}
\reld{\nfextup} = \cond{\nfextup},
\hspace{3em}
\reld{\nfextbig} = \cond{\nfextbig}^{2^a}
\end{equation}
Combining \eqref{eq:part1}, \eqref{eq:part2} and the fact that $\reld{\nfext} = 1$ since $\nfext$ is unramified,
we obtain
$$
\cond{\nfextbig}^{2^a} = N_{\nfext}\left(\cond{\nfextup}\right).
$$
And thus
\[
2^a \cdot \ord_\frakp(\cond{\nfextbig})
=
\ord_\frakp(N_{\nfext}(\cond{\nfextup}))
\leq
2^a \cdot c
\]
and so
$
\ord_\frakp(\cond{\nfextbig}) \leq c.
$
\end{proof}

\begin{tmpvar} \newcommand{\tmpnf}{K}%
For each number field $\tmpnf$ and for each integer $\frakm$, let
$E_\tmpnf(\frakm)$ be the smallest subfield of $H_\tmpnf(\frakm)$ containing $\tmpnf$
such that $\Gal(H_\tmpnf(\frakm) / E_\tmpnf(\frakm))$ is of exponent at most $2$.
\end{tmpvar}

\begin{thm} \label{thm:main1withideals}
Let $K$ be a number field without real embeddings. Let $\primeset$ be a finite set of prime ideals of $\Onf$ such that
\begin{itemize}
\item $| \Cl_K(1) / \langle \primeset \rangle |$ is odd,
\item $\primeset$ contains all prime ideals above $2$,
\item $\primeset$ contains at least $3$ elements.
\end{itemize}
Let $\frakmS = 4 \cdot \prod_{\frakp \in \primeset} \frakp$. Then $H_K(1) \subseteq E_K(\frakmS)$.
\end{thm}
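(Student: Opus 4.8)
The plan is to pass to Galois groups. Under the Artin isomorphism, $\Gal(H_K(\frakmS)/K) \cong \Cl_K(\frakmS)$, and the field $E_K(\frakmS)$ is the fixed field of the $2$-torsion subgroup $\Cl_K(\frakmS)[2]$: this subgroup has exponent at most $2$, and every subgroup of exponent at most $2$ is contained in it, so its fixed field is the smallest subfield of $H_K(\frakmS)$ over which the Galois group has exponent at most $2$. Hence it suffices to show that every $\sigma \in \Gal(H_K(\frakmS)/K)$ with $\sigma^2 = 1$ restricts to the identity on $H_K(1)$.

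So I would fix such a $\sigma$ and suppose, for contradiction, that $\tau := \sigma|_{H_K(1)} \neq 1$. Then $\tau$ has order $2$, and since $\Gal(H_K(1)/K) \cong \Cl_K(1)$ is finite abelian, there is a cyclic quotient of $\Gal(H_K(1)/K)$ isomorphic to $C_{2^a}$ for some $a \geq 1$ in which the image of $\tau$ is nontrivial. Let $N$ be the corresponding subextension of $H_K(1)/K$; then $N/K$ is cyclic of degree $2^a$, and $\sigma|_N = \tau|_N$ is the unique element of order $2$ of $\Gal(N/K)$. Being a subextension of the Hilbert class field, $N/K$ is unramified, so \Cref{lem:globembcond} applies (here $K$ has no real embeddings and $S$ meets the three listed conditions) and produces a cyclic extension $M/K$ of degree $2^{a+1}$, unramified outside $S$, with $N \subseteq M$.

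The next step is to place $M$ inside $H_K(\frakmS)$ by bounding the conductor $\cond{M/K}$, which is supported on $S$ because $M/K$ is unramified outside $S$. Applying the conductor bound \eqref{eq:bdoncondf} and \Cref{prop:valbound} to our fields $N$ (in the role of the unramified field $L$ there) and $M$, and using that $N/K$ is unramified so that $e(\frakP/2) = e(\frakp/2)$ whenever $\frakP$ is a prime of $N$ above a prime $\frakp$ of $K$, one obtains $\cond{M/K}(\frakp) \leq 1$ for $\frakp \in S$ not above $2$, and $\cond{M/K}(\frakp) \leq 2e(\frakp/2)+1$ for $\frakp \in S$ above $2$. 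On the other hand the $\frakp$-valuation of $\frakmS = 4\prod_{\frakp \in S}\frakp$ is $1$ at $\frakp \in S$ not above $2$, is $2e(\frakp/2)+1$ at $\frakp \in S$ above $2$ (since $v_\frakp(4) = 2e(\frakp/2)$), and is $0$ elsewhere, because $S$ contains all primes above $2$. Comparing prime by prime gives $\cond{M/K} \mid \frakmS$, hence $M \subseteq H_K(\frakmS)$.

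Finally I would derive the contradiction. The restriction $\sigma|_M$ lies in $\Gal(M/K)$ and has order dividing $2$; since $\Gal(M/K) \cong C_{2^{a+1}}$ is cyclic, $\sigma|_M$ lies in its unique subgroup of order $2$. But $\Gal(M/N) \leq \Gal(M/K)$ has order $[M:N] = 2$, so it is precisely that subgroup; therefore $\sigma$ fixes $N$, i.e. $\sigma|_N = 1$, contradicting $\sigma|_N = \tau|_N \neq 1$. Hence $\tau = 1$, and $H_K(1) \subseteq E_K(\frakmS)$. I expect the main obstacle to be the conductor estimate of the third paragraph: away from $2$ the ramification is tame and harmless, but at the primes above $2$ the bound $\cond{M/K}(\frakp) \leq 2e(\frakp/2)+1$ must match exactly the contribution of the factor $4$, and this matching is what forces the shape of $\frakmS$; once $M \subseteq H_K(\frakmS)$ is known, the remaining steps are a formal computation with cyclic $2$-groups.
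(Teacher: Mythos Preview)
Your proof is correct and follows essentially the same route as the paper's: the paper decomposes $\Gal(H_K(1)/K)$ into its odd part $G_0$ and cyclic $2$-parts $G_1,\dots,G_t$, applies \Cref{lem:globembcond} to each subfield $L_i$ with $\Gal(L_i/K)\cong G_i$ to obtain $M_i$, bounds the conductors exactly as you do via \eqref{eq:bdoncondf} and \Cref{prop:valbound}, and then shows directly that any $\sigma$ with $\sigma^2=1$ fixes every $L_i$ and hence $H_K(1)=L_0L_1\cdots L_t$. Your argument by contradiction simply singles out the one cyclic $2$-factor on which $\tau$ is nontrivial rather than handling them all simultaneously, but the mechanism is identical.
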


\begin{proof}
Suppose $\Gal(H_K(1)/K)$ is
$$
\Gal(H_K(1)/K)
=
G_0
\times
G_1
\times
\cdots
\times
G_t
$$
where $G_0$ is the largest subgroup of
$\Gal(H_K(1)/K)$ of odd order,
and $G_i$ is a cyclic group of order $2^{a_i}$ generated by $\sigma_i$
for $i \in \{1, \ldots, t\}$.
For each $j \in \{0, 1, \ldots, t\}$, let $L_j$ be the fixed field of $$G_0 \times \cdots \times G_{i-1} \times \langle 1 \rangle \times G_{i+1} \times \cdots \times G_t$$
by Galois theory.
Fix an $i \in \{1, \ldots, t\}$.
Since $L_i/K$ is an unramified cyclic number field extension of degree $2^{a_i}$,
\Cref{lem:globembcond} gives us the existence of a field extension $M_i$ of $K$ containing $L_i$ which is unramified outside $S$.
Let $\frakP_i$ be a prime ideal of $\calO_{L_i}$ above
a prime ideal $\frakp$ of $\Onf$,
and a rational prime $\ell$.
Since $L_i/K$ is unramified,
we find that
$
e(\frakP / \ell)
= 
e(\frakP / \frakp) e(\frakp / \ell)
= 
e(\frakp / \ell).
$
\Cref{eq:bdoncondf} and \Cref{prop:valbound} then tell us that
\[
\ord_{\frakp}(\frakf_{M_i/K}) \leq \begin{cases}
2e(\frakp/\ell) + 1 & \ell = 2 \\
1 & \ell \neq 2. \\
\end{cases}
\]
Since the conductor of a compositum of fields
divides the least common multiple of
the conductors of the fields being composed, the field $L_0M_1 \cdots M_t$ has a conductor
which divides
\[
\frakm
=
\prod_{\frakp \mid 2} \frakp^{2e(\frakp/2)}
\prod_{\frakp \in S} \frakp
=
4 \prod_{\frakp \in S} \frakp.
\]
{\newtmpvar{\tmpintfld}{K'}%
Denote by $\galgpHnf{\tmpintfld}$ the Galois group $\Gal(\Hnf[\frakm] / \tmpintfld)$
where $\tmpintfld$ is an abelian extension of $\nf$ contained in $\Hnf[\frakm]$.
}%
We want to show that
$H_K(1) \subseteq E_K(\frakm)$.
To do this, we show the equivalent condition
$$
\galgpHnf{E_K(\frakm)} \subseteq \galgpHnf{H_K(1)}.
$$
Let $\sigma \in \galgpHnf{E_K}(\frakm)$ and note that $\sigma^2 = 1$.
Note that $[\galgpHnf{K} : \galgpHnf{L_0}]$ is an odd integer and hence $\sigma \in \galgpHnf{L_0}$.
On the other hand,
for each $i \in \{1, \ldots, t\}$,
since $\sigma^2 = \id \in \galgpHnf{M_i}$
then by
definition of $M_i$,
$\sigma \in \galgpHnf{L_i}$.
Hence $\sigma$ fixes $L_0L_1 \cdots L_t = H_K(1)$.
And therefore $\sigma \in \galgpHnf{H_K(1)}$.
\end{proof}

The smallest positive integer $m$ contained in $\frakm$ is given by $m = 4P$ where $P$ is the product of all primes $p$ such that $p$ is below some $\frakp \in S$. With this observation and the fact that $E_\cmf(\frakm) \subseteq E_\cmf(\frakn)$ when $\frakm \mid \frakn$, \Cref{thm:main1} becomes a direct consequence of \Cref{thm:main1withideals}.

\section{Given an integer \texorpdfstring{$m$}{m}, does \texorpdfstring{$\ref{eq:star}$}{(*m)} hold?}
\label{sec:decisionprob}
\writefilename

Let $\cmfcmt$ be a primitive CM pair
and let $\cmfrcmtr$ be its reflex pair.

The goal of this section is to
describe an algorithm that,
given a positive integer $m$,
outputs whether or not \eqref{eq:star} holds.

From this section onwards,
denote by $\genord{g}{e}$
the cyclic group generated by an element $g$ of order $e$.
Note that $e$ may be $\infty$.

As the fields $\Hcmfr[1]$,
$\cmfr \Hcmfor[m]$,
$\CMcmfrcmtr[m]$, and $\Hcmfr[m]$
are all
abelian extensions of $\cmfr$
contained in $\Hcmfr[m]$,
we may use Galois theory
to rewrite \eqref{eq:star} 
in terms of subgroups
of the finite abelian group
$\Gal(\Hcmfr[m]/\cmfr)$
as%
\begin{equation} \label{eq:galstar}\tag{\ensuremath{\star\star_m}}
\galgpHcmfr{\Hcmfr[1]}
\,\supseteq\,
\galgpHcmfr{\cmfr \Hcmfor[m]}
\,\cap\,
\galgpHcmfr{\CMcmfrcmtr[m]},%
\end{equation} \renewcommand*{\theHequation}{notag.\theequation}%
\begin{tmpvar}\newcommand{\tmp}{K'}%
where $\galgpHcmfr{\tmp}$ is the subgroup of $\Gal(\Hcmfr[m]/\cmfr)$ fixing $\tmp$.%
\end{tmpvar}

As a subfield of $\Hcmfr[m]$,
the field $\Hcmfr[1]$ corresponds to the congruence subgroup
$$
\{
\fraka \in \Idcmfr[m]
:
\fraka = a\Ocmfr \text{ for some } a \in \cmfr
\}
$$
of $\Idcmfr[m]$.
The Galois group $\galgpHcmfr{\Hcmfr[1]}$ is
the kernel of the natural surjective map 
\begin{equation} \label{eq:f0}
\effzero: \Clcmfr[m] \to \Clcmfr[1].
\end{equation}
In the same vein,
the field $\cmfr \Hcmfor[m]$ corresponds to the congruence subgroup
$$
\{
\fraka \in \Idcmfr[m]
:
\fraka\bar\fraka = (a) \text{ for some } a \in \cmfor, \, a \equiv 1 \mod^\ast m
\}.
$$
Hence,
the Galois group $\galgpHcmfr{\cmfr\,\Hcmfor[m]}$ is also isomorphic to a kernel,
the kernel of the relative norm map
\begin{equation} \label{eq:f1}
\effone = \relnm: \Clcmfr[m] \to \Clcmfor[m].
\end{equation}

We can also compute the Galois group $\galgpHcmfr{\CMcmfrcmtr[m]}$ as a kernel of a map $\efftwo$ which we define in \Cref{subsec:shray}. The codomain of $\efftwo[1]$ is the Shimura class group studied in \cite[Section 3.1]{broker2011}. We generalize this Shimura class group by defining, in the same section, the Shimura \textit{ray} class group of $\cmf$ for a modulus $m$, which we denote by $\srcgcmf[\mdlsZ]$.

Let $\mdlsZ$ be a positive integer.
\Cref{subsec:shraycompute}
provides our algorithm to compute
the generators of
$\srcgcmf[\mdlsZ]$,
their respective orders as group elements,
and a discrete logarithm algorithm for
$\srcgcmf[\mdlsZ]$.
The end of the section details how
we can extract and use information about the group $\srcgcmf[\mdlsZ]$ to determine
whether or not \eqref{eq:galstar} holds
for the given integer $m$.

  \subsection{The Shimura ray class group}
  \label{subsec:shray}
\writefilename

Let $\cmfcmt$ be a CM pair and $\cmfrcmtr$ its reflex pair. The Shimura class group of $\cmf$, in conjunction with a group morphism involving the type norm map, was used in \cite[Section 2.2]{engethome2013} to compute the Galois group $\galgpHcmfr{\CMcmfrcmtr[1]}$. This section generalizes the Shimura class group and introduces the concept of a \textit{Shimura ray class group} for each modulus $\mdls$ of $\cmf$. We define it as follows.

\begin{defn}[Shimura ray class group for the modulus $\mdls$]
	Let $\mdls$ be a modulus of a CM field $\cmf$.
	The Shimura ray class group $\srcgcmf[\mdls]$ is the group given by
	\[
	\srcgcmf[\mdls]
	=
	\frac{\{ (\fraka, a) \in \Idcmf[\mdls] \times \cmfox : \ibari{\fraka} = a\Ocmf, a \gg 0 \}
	}{\{(x\Ocmf, \ibari{x}) \in \Idcmf[\mdls] \times \cmfox : x \in K^\times, x \equiv \onemodstar \mdls\}}.
	\]
Multiplication of elements in $\srcgcmf[\mdls]$ is done by component-wise multiplication.
\end{defn}

Notice that the definition of the Shimura ray class group of a CM field $\cmf$ is independent of CM types and also does not concern reflex fields. 

For any positive integer $m$ and any CM pair $\cmfcmt$,
the type norm map induces a map from the ray class group of $\cmfr$
to the Shimura ray class group of $\cmf$ as follows
\begin{equation} \label{eq:calNm}
\begin{aligned}
\efftwo: \Clcmfr[m] &\to \srcgcmf[m] \\
[\frakb] &\mapsto [\left(\typnmr(\frakb), \absnm{\Kr}[\frakb]\right)].
\end{aligned}
\end{equation}
The kernel of the map $\efftwo$,
by definition of $\Idcmfrcmtr[m]$,
is exactly  $\Idcmfrcmtr[m] / \Prcmfr[m]$
and is thus isomorphic to
$\galgpHcmfr{\CMcmfrcmtr[m]}$.

Just like the case when $\mdls = 1$, the Shimura \textit{ray} class group  for a modulus $\mdls$ fits as the `$B$'-term term of a short exact sequence $1 \to A \to B \to C \to 1$ for some computable $A$ and $C$. We start by introducing and computing the ingredients of $A$ and $C$.

Let $\cmfo$ be the real subfield of a CM field $\cmf$. Write $\Ocmfox$ for the group of units of the ring of integers of $\cmfo$ and write $\Ocmfoxp$ for the subgroup of $\Ocmfox$ consisting of only the totally positive units of $\cmfo$.

\begin{ex} \label{ex:Ocmfox}
Let $\cmf$ be a quartic CM field different from $\IQ(\zeta_5)$.
Dirichlet's unit theorem gives us
$$
\Ocmfox = \genord{-1}{2} \times \genord{\fuo}{\infty}
$$
for some fundamental unit $\fuo$.
Furthermore, we find that $\Ocmfoxp = \langle \fuo^+ \rangle$ where
$$
\fuo^+ = \begin{cases}
\fuo & \fuo \gg 0 \\
-\fuo & \fuo \ll 0 \\
\fuo^2 & \text{otherwise}. \\
\end{cases}
$$
\end{ex}

Denote by $\Ocmfonex$ the kernel of the natural map
\begin{equation} \label{eq:tildebullet}
s: \Ocmfx \to \left( \Ocmf / \mdls \right)^\times
\end{equation} 
The image $\relnm\left(\Ocmfonex\right)$ is easily observed to be contained in $\Ocmfoxp$.
Indeed, we have $\cmf = \cmfo(\sqrt{-z})$ for some totally positive element $z \in \cmfo$
and the relative norm of a nonzero element $x = a + b\sqrt{-z} \in \cmf$ is $a^2 + b^2z$,
which is a totally positive element.
Thus, the norm $\relnm: \cmf \to \cmfo$
induces maps
\begin{equation} \label{eq:N1}
\None := N_{K/K_0}: \Ocmfonex \to \Ocmfoxp
\qquad : \qquad
x \mapsto \ibari{x}.
\end{equation}
and
\begin{equation} \label{eq:N2}
\Ntwo := \relnm: \Clcmf[\mdls] \to \Clcmfo[1][+]
\qquad : \qquad
[\fraka] \mapsto [\ibari{\fraka}].
\end{equation}

We define the maps $f$ and $g$ as follows
\begin{aligncol*}{3}
f: \Ocmfoxp &\to \srcgcmf[m] \\
u &\mapsto [(\Ocmf, u)] \\
&\text{and}\\
&\\
g: \srcgcmf[m] &\to \Clcmf[\mdls] \\
[(\fraka, a)] &\mapsto [\fraka].
\end{aligncol*}

We are now ready to state and prove the following lemma.
\begin{lem} \label{lem:shortexactlemma}
	The sequence
	\begin{equation*}
\Ocmfonex
\too{\None}
\Ocmfoxp
\too{f}
\srcgcmf[\mdls]
\too{g}
\Clcmf[\mdls]
\too{\Ntwo}
\Clcmfo[1][+]
	\end{equation*}
	is exact. Consequently, the sequence
\begin{equation*}
1
\to
\coker \None
\too{f}
\srcgcmf[\mdls]
\too{g}
\ker Ntwo
\to
1
\end{equation*}
is exact.
\end{lem}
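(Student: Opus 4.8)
The plan is to verify exactness of the five-term sequence at each of its three interior nodes $\Ocmfoxp$, $\srcgcmf[\mdls]$ and $\Clcmf[\mdls]$ by a direct chase through the definitions of the quotient groups and the four maps, and then to read off the short exact sequence by a purely formal argument. Before the chase I would first check that all four maps are well defined: each of $\None$, $f$, $g$, $\Ntwo$ visibly respects the equivalence relations defining the relevant quotients, $\None$ lands in $\Ocmfoxp$ as already observed just before the lemma, and $\Ntwo$ lands in $\Clcmfo[1][+]$ because $\relnm(x)=\ibari{x}\gg 0$ for every $x\in\cmf^\times$ (it is a square under each real embedding of $\cmfo$). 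Two elementary facts are then used repeatedly: (i) since $\cmf$ is totally imaginary it has no real places, so for elements of $\cmf$ the condition $\equiv\onemodstar\mdls$ is just ordinary congruence to $1$ modulo $\mdls$, and in particular $\Ocmfonex=\{\,u\in\Ocmfx:u\equiv\onemodstar\mdls\,\}$; and (ii) the relative norm $\relnm(\fraka)$ is the unique ideal of $\Ocmfo$ whose extension to $\Ocmf$ is $\ibari{\fraka}$, so $\ibari{\fraka}=a\Ocmf$ with $a\in\cmfox$ forces $\relnm(\fraka)=a\Ocmfo$.

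For exactness at $\Clcmf[\mdls]$, the inclusion $\operatorname{im} g\subseteq\ker\Ntwo$ is immediate, since a numerator pair $(\fraka,a)$ gives $\Ntwo([\fraka])=[\relnm(\fraka)]=[a\Ocmfo]$, which is trivial in $\Clcmfo[1][+]$ because $a\gg 0$; conversely, if $[\fraka]\in\ker\Ntwo$ then, choosing a representative in $\Idcmf[\mdls]$ with $\relnm(\fraka)=a\Ocmfo$, $a\in\cmfox$, $a\gg 0$, the pair $(\fraka,a)$ lies in the numerator and $g([(\fraka,a)])=[\fraka]$. For exactness at $\srcgcmf[\mdls]$, $\operatorname{im} f\subseteq\ker g$ is immediate, and if $[(\fraka,a)]\in\ker g$ then $\fraka=x\Ocmf$ with $x\equiv\onemodstar\mdls$, so dividing the representative $(\fraka,a)$ by the denominator element $(x\Ocmf,\ibari{x})$ replaces it by $(\Ocmf,u)$ with $u:=a/\ibari{x}$; here $u\in\cmfox$, and $u\in\Ocmfx$ because $a\Ocmf=\ibari{\fraka}=\ibari{x}\,\Ocmf$, so $u\in\Ocmfox$, and $u\gg 0$ since $a$ and $\ibari{x}$ are, whence $u\in\Ocmfoxp$ and $[(\fraka,a)]=f(u)$. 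For exactness at $\Ocmfoxp$, $\operatorname{im}\None\subseteq\ker f$ holds because for $x\in\Ocmfonex$ the pair $(\Ocmf,\ibari{x})=(x\Ocmf,\ibari{x})$ is a denominator element (using $x\equiv\onemodstar\mdls$); conversely, if $f(u)=[(\Ocmf,u)]=1$ then $(\Ocmf,u)=(x\Ocmf,\ibari{x})$ for some $x\equiv\onemodstar\mdls$, so $x\in\Ocmfx$ (as $x\Ocmf=\Ocmf$), hence $x\in\Ocmfonex$ by (i) and $u=\None(x)$.

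The second assertion is then formal: five-term exactness says $f$ induces an isomorphism from $\coker\None=\Ocmfoxp/\operatorname{im}\None$ onto $\ker g$, that $\ker g=\operatorname{im} f$, and that $\operatorname{im} g=\ker\Ntwo$; assembling these three statements gives the short exact sequence $1\to\coker\None\to\srcgcmf[\mdls]\to\ker\Ntwo\to 1$. I do not expect a genuine obstacle in this argument: the only thing demanding real care is tracking the totally-positive and $\onemodstar\mdls$ conditions while adjusting representatives of Shimura ray classes within their equivalence class (so that one never leaves the numerator of $\srcgcmf[\mdls]$), together with the identification of $\Ocmfonex$ in (i), which is the single place where totally-imaginariness of $\cmf$ is used.
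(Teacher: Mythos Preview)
Your proposal is correct and follows essentially the same approach as the paper: a direct diagram chase verifying exactness at each of the three interior nodes, with the short exact sequence read off formally. You are slightly more careful than the paper in spelling out well-definedness of the maps, the identification of $\Ocmfonex$ via total imaginariness of $\cmf$, and the passage from $\ibari{\fraka}=a\Ocmf$ to $\relnm(\fraka)=a\Ocmfo$, but the substance of the argument is the same.
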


\begin{proof}
	We first prove exactness at $\Ocmfoxp$.
	Let $u \in \Ocmfoxp$ such that $[(\Ocmf, u)]$ is trivial in $\srcgcmf[m]$.
	Since $[(\Ocmf, u)]$ is the trivial class, the unit $u$ is of the form $\ibari{x}$ where $x \equiv \onemodstar \mdls$.
	Hence $u = \ibari{x}$ for some $x \in \Ocmfonex$.
	Thus, $\ker f \subseteq \im \None$.
	Moreover, for $x \in \Ocmfonex$, we have $f(\None(x)) = [(\Ocmf, \ibari{x})]$.
	This element is trivial in $\srcgcmf[m]$.
	Hence $\im \None \subseteq \ker f$.
	
	We prove exactness at $\srcgcmf[\mdls]$.
	Given $[(\fraka, a)] \in \ker g$,
	we have $\fraka = \alpha\Ocmf$ for some $\alpha \in K$
	with $\alpha \equiv 1 \mod^\star \mdls$.
	And so $\ibari{\alpha}\Ocmf = \ibari{\fraka} = a\Ocmf$.
	Hence $\ibari{\alpha}u = a$ for some unit $u \in \Ocmf$.
	Since $\ibari{\alpha}$ is a relative norm
	for the extension $K/\cmfo$, it is in $\cmfo$
	and totally positive.
	Moreover, tle element $a$ is also in $\cmfo$ and totally positive by definition of $\srcgcmf[m]$.
	Thus $u$ must also be in $\cmfo$ and totally positive and hence $u \in \Ocmfoxp$.
	Since
	\[
	[(\fraka, a)]
	=
	[(\alpha\Ocmf, \ibari{\alpha} u)]
	=
	[(\Ocmf, u)],
	\]
	the class $[(\fraka, a)]$ is evidently in the image of the $f$.
	And so $\ker g \subseteq \im f$.
	Now, for any $u \in \Ocmfoxp$,
	we have
	$g(f(u)) = [\Ocmf]$.
	Hence, $\im f \subseteq \ker g$.
	
	We prove exactness at $\Clcmf[\mdls]$.
	Suppose $[\fraka] \in \Clcmf[\mdls]$ is such that $[\ibari{\fraka}] = a\Ocmfo$ for some $a \in \cmfo$ with $a$ totally positive.
	And so $[\fraka, a] \in \srcgcmf[m]$ and $g([\fraka, a]) = [\fraka]$.
	Hence, $\ker \Ntwo \subseteq \im g$.
	Suppose $[(\fraka, a)] \in \srcgcmf[m]$.
	First $g([(\fraka, a)]) = [\fraka]$.
	By definition of $\srcgcmf[m]$, we have $\Ntwo([\fraka]) = a\Ocmfo$ for some $a \in \cmfo$ with $a$ totally positive. Thus $\Ntwo(g([(\fraka, a)]))$ is trivial. Thus $\im g \subseteq \ker \Ntwo$.
\end{proof}

  \subsection{Computing the Shimura Ray Class Group}
  \label{subsec:shraycompute}
\writefilename

\Cref{lem:shortexactlemma} states that
$\srcgcmf[\mdls]$ fits as the `$B$-term' in a short exact sequence of the form $1 \to A \to B \to C \to 1$.
In this section, we compute each term in that short exact sequence using algorithms on finitely generated abelian groups found in \cite{cohen2001} and \cite[Chapter 4]{cohen2000}. 
All algorithms discussed in this section are efficient and practical in the sense that
\begin{itemize}
\item \label{enum:ALG1} fast implementations are available\footnote{Available either as one of the built-in functions, or implemented by the author.} in PARI/GP \cite{pari211} or Magma \cite{magma}, and
\item \label{enum:ALG2} when these implementations are used for our examples in \Cref{subsec:algoimpl}, they are in practice not the dominant step of the computation.
\end{itemize}

Let $G$ be a finitely generated abelian group.
By the fundamental theorem of finitely generated abelian groups,
there exist unique non-negative integers $1 \neq \ordvar_1, \ldots, \ordvar_\gencnt$ such that $\ordvar_\gencnt \mid \ordvar_{\gencnt-1} \mid \cdots \mid \ordvar_1$
and there exists \textit{a discrete logarithm isomorphism}
$$
\dlogiso{\grp} : \grp \to \grpdlg :=
\genordadd{}{\ordvar_1}
\times
\cdots
\times
\genordadd{}{\ordvar_\gencnt}
$$
Given such an isomorphism $\dlogiso{\grp}$, we say that the \textit{standard set of generators} with respect to $\dlogiso{\grp}$ is the set of inverse images of all $\genvarstd_i \in \grpdlg$, where $\genvarstd_i$ is the vector in $\grpdlg$ whose entries are all $0$, save for the $i$th entry, which is $1$.

We say that a finitely generated abelian group $\grp$ is \textit{computed} if the integers $d_1, \ldots, d_n$ of $V_G$ are known and there exists a discrete logarithm isomorphism $\dlogiso{\grp}$ from $\grp$ to $\grpdlg$
such that
\begin{enumerate}[leftmargin={3em},leftmargin={5em},label=\textbf{CG\arabic*}]
\item \label{enum:CG1}%
the inverse images under $\dlogiso{\grp}$ of the generators $\genvarstd_1, \ldots, \genvarstd_\gencnt$ are known, and
\item \label{enum:CG2}%
an algorithm that outputs $\dlogiso{\grp}(g)$ given $g \in \grp$ is known.
\end{enumerate}

A subgroup $\subgrp$ of a computed group $\grp$
is said to be \textit{computed with respect to $\dlogiso{\grp}$}
if we know an $n \times n$ matrix
$\hat{\subgrp} = (h_{i,j})$
in column Hermite normal form (HNF)
such that
$$
\left\{
\dlogiso{\grp}^{-1}
\left(\trpos*{\begin{bmatrix}
h_{1,j} & \cdots &h_{n,j}
\end{bmatrix}}\right)
:
j = 1, \ldots, \gencnt
\right\}
$$
generates $\subgrp$.
With such a Hermite normal form matrix, one can use a Smith normal form (SNF) algorithm \cite[Algorithm 4.1.3]{cohen2000} to compute $H$ as in \ref{enum:CG1} and \ref{enum:CG2}.

Let $\cmf$ be a quartic CM field
and $\mdls$ be a modulus of $\cmf$.
From \Cref{lem:shortexactlemma},
the group
$\srcgcmf[\mdls]$
fits as the middle term of the exact sequence
\begin{equation} \label{lem:shortexactCKm}
1
\to
\coker \None
\too{f}
\srcgcmf[\mdls]
\too{g}
\ker \Ntwo
\to
1.
\end{equation}

We would like to compute this middle term. We use the group extension algorithm \cite[Algorithm 4.1.8]{cohen2000} to compute $\srcgcmf[\mdls]$. To compute the middle term of \eqref{lem:shortexactCKm} using this algorithm, we need:
\begin{enumerate}[leftmargin={3em},leftmargin={5em},label=\textbf{GEXT\arabic*}]
\item \label{enum:GEXT1} to compute the group $\ker \Ntwo$,
\item \label{enum:GEXT2} to compute the group $\coker \None$,
\item \label{enum:GEXT3} an algorithm which, given $[\frakb, \beta] \in \im f$, outputs $a \in \coker N_1$ such that $f(a) = [\frakb, \beta] \in \im f$.
\end{enumerate}

We first compute the kernel $\ker \Ntwo: \Clcmf[\mdls] \to \Clcmfo[1][+]$ using the inverse image algorithm found in \cite[Algorithm 4.1.11]{cohen2000}. In order to use this algorithm, we need:
\begin{enumerate}[leftmargin={3em},leftmargin={5em},label=\textbf{KER\arabic*}]
\item \label{enum:KER1} to compute the groups $\Clcmf[\mdls]$ and $\Clcmfo[1][+]$, and
\item \label{enum:KER2} an algorithm which gives $\dlogiso{\Clcmfo[1][+]}(\Ntwo([\fraka]))$ given $[\fraka] \in \Clcmf[\mdls]$.
\end{enumerate}

The groups mentioned in \ref{enum:KER1} can be computed using \cite[Algorithm 4.3.1]{cohen2000}. For \ref{enum:KER2}, if we choose an ideal class representative $\fraka$ in $\Clcmf[\mdls]$ and apply the relative norm map $\relnm: \Idcmf[\mdls] \to \Idcmfo[1][+]$ on ideals using \cite[Algorithm 2.5.2]{cohen2000}, then $\relnm[\fraka]$ is a representative of the image of the ideal class $[\fraka]$ under $\Ntwo$. Hence, we can compute~$\dlogiso{\Clcmfo[1][+]}(\Ntwo([\fraka]))$ by using \cite[Algorithm 4.3.2]{cohen2000}.

We compute the quotient group
$$
\coker \None = \Ocmfoxp / \relnm(\Ocmfonex)
$$
using \cite[Algorithm 4.1.7]{cohen2000}. In order to use this algorithm, we need:
\begin{enumerate}[leftmargin={3em},leftmargin={5em},label=\textbf{QUO\arabic*}]
\item \label{enum:QUO1} to compute the group $\Ocmfoxp$, and
\item \label{enum:QUO2} to compute $\relnm(\Ocmfonex)$ with respect to the discrete logarithm isomorphism $\dlogiso{\Ocmfoxp}$ obtained from doing \cref{enum:QUO1}
\end{enumerate}

For \ref{enum:QUO1}, we simply compute $\Ocmfox$ using \cite[Algorithm 6.5.7]{cohen1993} and then use \Cref{ex:Ocmfox} to obtain generators and a discrete logarithm isomorphism $\dlogiso{\Ocmfoxp}$ for $\Ocmfoxp$. With this, for any $x \in \Ocmfox$, we can find $\dlogiso{\Ocmfoxp}(s)$ if we know $\dlogiso{\Ocmfox}(s)$.

For \ref{enum:QUO2}, we first compute the groups $\Ocmfx$ and $\idealstar[\mdls]$ using \cite[Algorithm 4.2.21]{cohen2000} and \cite[Algorithm 4.1.11]{cohen2000}, respectively. Let 
$$
s: \Ocmfx \to \idealstar[\mdls]
$$
be the map induced by the natural map $\Ocmf \to \Ocmf/\mdls$. We are interested in $\Ocmfonex$, which is $\ker s$. An algorithm to compute $\dlogiso{\idealstar[\mdls]}(s(x))$ which takes as input $x \in \Ocmfx$ is given by \cite[Algorithm 6.5.7]{cohen1993}. With such an algorithm and the fact that we have computed both $\Ocmfx$ and $\idealstar[\mdls]$, we can use \cite[Algorithm 4.1.11]{cohen2000} to compute $\Ocmfonex$ with respect to $\dlogiso{\Ocmfx}$. From there, we can use \cite[Algorithm 4.1.11]{cohen2000} to compute the image $\relnm(\Ocmfonex)$ with respect to $\dlogiso{\Ocmfoxp}$.

\begin{ex}[label={ex:runningex}]
Let $\cmfo$ be the real quadratic field $\cmfodab{\alpha_0}[809]{53}{500}$. Consider the quartic CM field $\cmf = \cmfo(\alpha)$ where $\alpha$ is a root of $X^2 - \alpha_0$.
Solving for fundamental units of $\Ocmfx$ and $\Ocmfox$, we get
$$
\vareps = 30506849866\alpha^2 + 374579495409$$
and
$$\vareps_0 = 30506849866\alpha_0 + 374579495409,$$
respectively.
Having $\vareps = \vareps_0$ corresponds to one of the three possible cases as discussed in \Cref{ex:Ocmfox}. Using the notation we established in \Cref{sec:decisionprob}, we have
$$
\Ocmfx = 
\genord{-1}{2}
\times
\genord{\vareps}{0}
\quad\quad
\text{and}
\quad\quad
\Ocmfox = 
\genord{-1}{2}
\times
\genord{\vareps_0}{0}.
$$
Meanwhile, the group $\idealstar[2\Ocmf]$ is given by
$$
\idealstar[2\Ocmf]
=
\genord{-1/10\alpha^3 - \alpha^2 - 33/10\alpha - 26}{2}
\times
\genord{-\alpha^2 -\alpha - 27}{2}.
$$
Since
$$
-1 \equiv 1 \mod 2
\quad\quad
\text{and}
\quad\quad
\vareps = 30506849866\alpha^2 + 374579495409\equiv 1 \mod 2,
$$
we find that the map $s: \Ocmfx \to \idealstar[2\Ocmf]$ sends all elements of its domain to $1$. Hence, $\Ocmfonex[2] = \Ocmfx$. The image of $\Ocmfonex[2]$ under the relative norm map $\relnm$ is the index $4$ subgroup
\[
\relnm\left( \Ocmfonex[2] \right) = \langle \vareps_0^2 \rangle \subseteq \Ocmfox.
\]
Note that $\vareps_0$ is not totally positive since the norm $\absnm{\cmf_0}(\vareps_0)$ of $\vareps_0$ is $-1$.
Thus, we find that
\[
\Ocmfoxp = \langle \vareps_0^2 \rangle
\]
And so $\coker \None$ is the trivial group.

We now compute $\ker \Ntwo$. The ray class groups $\Clcmf[2]$ and $\Clcmfo[1][+]$ are given by
$$
\Clcmf[2] = 
\genord{[\fraka_1]}{8}
\times
\genord{[\fraka_2]}{4}
\hspace{2em}
\text{where}
\hspace{2em}
\fraka_1 = (7, \alpha - 2)
\hspace{1em}
\text{and}
\hspace{1em}
\fraka_2 = (7159, \alpha - 2627),
$$
and
$$
\Clcmfo[1][+] = 1. 
$$
The codomain is trivial and so $\ker \Ntwo = \Clcmf[2]$.
\end{ex}

We now give an algorithm that \ref{enum:GEXT3} asks for. This algorithm is based on the remark below \cite[Algorithm 4.1.8]{cohen2000}.

\begin{algo} \label{algo:preimf} Finding inverse images of $f: \coker \None \to \srcgcmf[\mdls]$.
\\\textsc{Input}. $[\frakb, \beta] \in \im f$
\\\textsc{Output}. $a \in \coker \None$ such that $f(a) = [\frakb, \beta] \in \im f$
\begin{enumerate}
	\item Compute $\Clcmf[m]$ using \cite[Algorithm 4.3.1]{cohen2000}.
	\item Find $x$ such that $\frakb = x\Ocmf$ using \cite[Algorithm 4.3.2]{cohen2000}. This algorithm uses the computation from the previous step.
	\item Let $\alpha = \beta/(\ibari{x})$.
	\item Return $[\alpha]$.
\end{enumerate}
\end{algo}

If one wishes to take inverse images of multiple elements in $\im f$, one can compute $\Clcmf[m]$ once and for all as it does not depend on the input $[\frakb, \beta]$ and proceed with the second step.

Now that we have done \ref{enum:GEXT1}, \ref{enum:GEXT2}, and \ref{enum:GEXT3}, we use \cite[Algorithm 4.1.8]{cohen2000} and the paragraph below it to compute $\srcgcmf[\mdls]$ in the sense of \ref{enum:CG1} and \ref{enum:CG2}.

  \subsection{Using the Shimura ray class group to determine whether \texorpdfstring{$m$}{m}, does \texorpdfstring{$\ref{eq:star}$}{(*m)} holds.}
  \label{subsec:containment}
\writefilename

Let $(K, \Phi)$ be a primitive quartic CM pair and let $\cmfrcmtr$ be its reflex.
Let $m$ be a positive integer.

We have established in \Cref{subsec:shray} that $\ker \efftwo[m]$
is exactly $\Id{\Kr, \cmtr}[m] / \Pr{\Kr}[m]$,
which is isomorphic to $\Gal(\hcf{\Kr}[m] / \CMcmfrcmtr[m])$ via the Artin map.
Recall the functions $\effzero$ and $\effone$ defined in \eqref{eq:f0} and \eqref{eq:f1}, respectively. We have the following isomorphisms via the Artin map:
\begin{align} \label{eq:kereffs}
\begin{split}
\Gal(\hcf{\Kr}[m] / \hcf{\Kr}[1]) &\cong \ker \effzero = \ker(\clgp{\Kr}[m] \to \clgp\Kr[1]), \\
\Gal(\hcf{\Kr}[m] / \hcf{\cmfor}[m]) &\cong \ker \effone = \ker(\clgp{\Kr}[m] \to \clgp{K_0^r}[m]), \\
\Gal(\hcf{\Kr}[m] / \CMcmfrcmtr[m]) &\cong \ker \efftwo[m] = \ker(\clgp{\Kr}[m] \to \srcgcmf[m] ).
\end{split}
\end{align}
Therefore,
we can rewrite \eqref{eq:galstar} as
\begin{equation} 
\ker \effzero \supseteq \ker \effone \,\cap\, \ker \efftwo[m]. 
\end{equation}
All groups involved are subgroups of $\clgp\Kr[m]$.
So, we end up with the following algorithm.
\begin{algo} \label{algo:doesstarmhold}
\textsc{Input}. A primitive quartic CM pair $(K, \Phi)$ with reflex $\cmfrcmtr$ and a positive integer $m$.
\\ \textsc{Output}. Returns \textsc{Yes} if \eqref{eq:star} holds for the integer $m$. Otherwise, returns \textsc{No}.
\begin{enumerate}
\item Compute the groups $\clgp\Kr[m], \clgp\Kr[1], \clgp{K_0^r}[m], \srcgcmf[m]$.
\item Compute the kernels of the maps $\effzero, \effone, \efftwo[m]$.
\item Compute the intersection $I = \ker \effone \,\cap\, \ker \efftwo[m]$, a subgroup of $\clgp\Kr[m]$.
\item Compute the intersection $J = \ker \effzero \,\cap\, I$, a subgroup of $\clgp\Kr[m]$.
\item If $I = J$, return \textsc{Yes}. Otherwise, return \textsc{No}.
\end{enumerate}
\end{algo}

\label{fgagmentioned}%
The author has implemented the algorithm that finds $\srcgcmf[\mdls]$ when $\cmf$ is a primitive quartic CM field, and the above \Cref{algo:doesstarmhold} in PARI/GP \cite{pari211}, both of which use algorithms involving morphisms between finitely generated abelian groups, particularly ray class groups. These morphism algorithm implementations are implemented by the author as functions in PARI/GP, collected in a file \texttt{fgag.gp}.

\begin{ex}[continues={ex:runningex},label={ex:runningex2}]
Having explicitly computed the groups $\coker \None$ and $\ker \Ntwo$, we compute
\[
\srcg{\cmf}[2] = 
\genord{[\fraka_1, a_1]}{8}
\times
\genord{[\fraka_2, a_2]}{4},
\]
where
$$
a_1 = -18\alpha_0 - 733
\hspace{2em}
\text{and}
\hspace{2em}
a_2 = -14752\alpha_0 - 600723.
$$

The reflex of the quartic CM pair $\cmfrcmtr$ of $\cmfcmt$ with $\cmfr = \cmfdab{\alphar}[5]{106}{809}$. The ray class field $\Hcmfor[2]$ of its totally real subfield $\cmfor$ is isomorphic to $\cmfor \cong \IQ(\sqrt{5})$.

We do the first step of \Cref{algo:doesstarmhold}. We get
\begin{aligncol*}{2}
\Clcmfr[2] &= \genord{\frakb_1}{16} \times \genord{\frakb_2}{2}, \\
\ker \effzero &= \genord{\frakb_1^8}{2} \times \genord{\frakb_2}{2}, \\
\ker \effonetwo &= \genord{\frakb_1}{16} \times \genord{\frakb_2}{2}, \\
\ker \efftwotwo &= \genord{\frakb_1^8}{2},
\end{aligncol*}
where $\frakb_1 = (11, 1/40\alphar^3 + 73/40\alphar + 1)$ and $\frakb_2 = (-1/20\alphar^3 - 1/40\alphar^2 - 93/20\alphar - 73/40)$. Continuing with the rest of the steps of \Cref{algo:doesstarmhold}, we find that \eqstar{2} holds.
\end{ex}

The following example concerns the formula for $\mS$ given in \Cref{cor:main1}.

\begin{ex}
\writefilename

We take $\cmf = \cmfdab{\alpha}[101]{65}{425}$ and let $\galcl$ be its Galois closure. Fix an embedding $\embtoC: \galcl \to \IC$. Choose the CM type $\Phi$ of $\cmf$ such that both embeddings send $\alpha$ to the positive imaginary axis. Consider the reflex pair $\cmfrcmtr$ of the CM pair $\cmfcmt$. Here, the reflex field is $\cmfr = \cmfdab{\alpha_r}[17]{130}{2525}$.

There are three prime ideals of $\Ocmfr$ over the rational prime $2$, namely
\begin{align*}
\frakp_1 &= 2\Ocmfr + (1/2\alpha_r - 1/2)\Ocmfr, \\
\frakp_2 &= 2\Ocmfr + (1/2\alpha_r + 1/2)\Ocmfr,\text{ and }\\
\frakp_3 &= (1/20{\alpha_r}^2 + 7/4)\Ocmfr. 
\end{align*}
The ideal class group $\Clcmfr$ of $\Kr$ is cyclic
and is generated by the class $[\frakp_1]$.
We may take $S = \{\frakp_1, \frakp_2, \frakp_3\}$.
After verifying that this set $S$ satisfies the hypotheses of \Cref{cor:main1}, we find that $\mS = 8$ and conclude that
$$
H_\Kr(1) \subseteq H_{K_0^r}(8) \CM_{\Kr,\Phi^r}(8).
$$

Computing the ray class group of $\Kr$ for the modulus $8$,
we obtain
$$
\Clcmfr[8]
=
\genord{[\fraka_1]}{48}
\times
\genord{[\fraka_2]}{4}
\times
\genord{[\fraka_3]}{2}
\times
\genord{[\fraka_4]}{2}
\times
\genord{[\fraka_5]}{2}
$$
for certain ideals $\fraka_i$.\footnote{The ideals $\fraka_1, \fraka_2, \fraka_3, \fraka_4, \fraka_5$ are $(443, \frac{1}{2}\alpha_r - \frac{263}{2}), (170999, \frac{1}{2}\alpha_r + \frac{120011}{2}), (41051, \frac{1}{2}\alpha_r - \frac{37351}{2}), (292141, \frac{1}{2}\alpha_r + \frac{198863}{2}), (172229, \frac{1}{2}\alpha_r + \frac{51253}{2})$.}

We compute the kernels of $\effzero, \effone[8], \efftwo[8]$, as defined in \eqref{eq:kereffs}.
We find that
\begin{align*}
\ker \effzero &= 
\genord{[\fraka_1^8\fraka_2^3]}{12}
\times
\genord{[\fraka_1^{24}\fraka_2^2]}{2}
\times
\genord{[\fraka_3]}{2}
\times
\genord{[\fraka_4]}{2}
\times
\genord{[\fraka_5]}{2}, \\
\ker \effone[8] &= 
\genord{[\fraka_1^{25}\fraka_4]}{48}
\times
\genord{[\fraka_2^3\fraka_4\fraka_5]}{4}
\times
\genord{[\fraka_3\fraka_4]}{2},\\
\ker \efftwo[8] &= 
\genord{[\fraka_1^{36}\fraka_2^2\fraka_4\fraka_5]}{4}, \\
\ker \effone[8] \cap \ker \efftwo[8] &=
\genord{[\fraka_1^{24}]}{2}.
\end{align*}


Neither $\ker \effone[8]$ nor $\ker \efftwo[8]$ is contained in $\ker \effzero$. However, their intersection is. Hence, this is an example for the Hilbert class field $H_{\Kr}(1)$ is neither contained in the field $\Kr H_{K_0^r}(8)$ nor the field $\CM_{\Kr,\Phi^r}(8)$ but is contained in the composite of these two fields.

Moreover one can check using \Cref{algo:doesstarmhold} that, in this example, \eqstar{d} does not hold for any proper divisors $d$ of $\mS = 8$. In addition, 
computing the kernels
\begin{align*}
\ker \left(\effone[8]': \Clcmfr[8] \to \Cl_{K_0^r}(4)\right)
&= \Clcmfr[8], \\
\ker \left(\efftwo[8]': \Clcmfr[8] \to \frakC_K(4)\right) &=
\genord{[\fraka_1^{12}]}{4}
\times
\genord{[\fraka_2^{2}]}{2}
\times
\genord{[\fraka_4]}{2}
\times
\genord{[\fraka_5]}{2},
\end{align*}
and the relevant intersections of groups (analogous to what we did in the previous paragraph), we find that neither $H_{K_0^r}(4) \CM_{\Kr,\Phi^r}(8)$ nor $H_{K_0^r}(8) \CM_{\Kr,\Phi^r}(4)$ contains $H_\Kr(1)$.

Finally,
we remark
that $8$ is not the minimum integer $m$
for which \eqref{eq:star} holds.
One can verify that the smallest integer for which \eqref{eq:star} holds is $m = 5$.
This is done by recalling that \eqref{eq:star} does not hold $m = 1, 2, 4$ as they are proper divisors of $8$ and then applying \Cref{algo:doesstarmhold} to $m = 3$ and then $m = 5$.
\end{ex}

\section{Computing \texorpdfstring{$\Hcmfr(1)$}{H_Kr(1)} under \texorpdfstring{(\protect\hyperlink{label:eqstar}{\ensuremath{\star_2}})}{(*2)}}
\label{subsec:algoimpl}
\writefilename

Let $\cmf$ be a primitive quartic CM field and let $\cmt$ be a CM type of $\cmf$.
Let $\cmfrcmtr$ be the reflex pair of $\cmfcmt$.

The field extension $\CMcmfrcmtr[1]$ of $\cmfr$ is obtained by computing Igusa invariants \cite{igusa1967, spallek1994} and appending them to the field $\cmfr$. In a similar manner, the field extension $\CMcmfrcmtr[2]$ is obtained by computing Rosenhain invariants \cite{wamelen1999} instead of Igusa invariants.

\Cref{subsec:theta} gives a brief recall of theta functions, Rosenhain invariants and references how to compute these invariants.

In \Cref{subsec:Hcmfr1intheory}, we discuss how to compute, assuming \eqstar{2}, a defining polynomial for $\Hcmfr[1]$ in terms of these Rosenhain invariants. We also discuss our proof-of-concept implementation in this section. The author is working on how to get similar results when the assumption is \eqstar{m} for integers $m \geq 3$.

In \Cref{subsec:cmbeatskummer}, we compare our implementation of the CM theory algorithm to the current implementations of the well-known Kummer theory algorithm.

  \subsection{Theta constants and Rosenhain invariants}
  \label{subsec:theta}
\writefilename

Let $g \in \IZpos$. Let $\IH_g$ denote the set of $g \times g$ symmetric matrices over $\IC$ with positive definite imaginary part. For any $\va, \vb \in \IQ^g$, the \textit{theta function with characteristic $\colvec{ \va \\ \vb }$} is the function
\begin{align*}
\thfun[\va \\ \vb](\vz, \PM) &= \sum_{\vn \in \IZ^g} \exp\left(\, \pi i \trpos{\vn + \va} \PM (\vn + \va) \right) \exp \left( 2\pi i\, \trpos{\vn + \va} (\vz + \vb)\right)
\end{align*}
 on $\IC^g \times \IH_g$.

Let $a_1, a_2, b_1, b_2 \in \{0, 1/2\}$.
We use the following short-hand notation, used in \cite{dupont2006,streng2010,cosset2011},
for the sixteen theta functions with half-integer characteristics
as follows:
$$
\theta_{16a_2+8a_1+4b_2+2b_1}(\vz, \PM) := \thfun[ \charmat{a_1}{a_2}{b_1}{b_2} ](\vz, \PM).
$$
For each $i \in \{0, \ldots, 15\}$, we denote by $\vartheta_i(\PM)$ the function $\theta_i(\vzero, \PM)$ on $\IH_g$.

Let $\ppas$ be a principally polarized abelian surface.
An ordered basis $\torb = \{B_1, B_2, B_3, B_4\}$ of the $2$-torsion subgroup of $\ppas$ is said to be \textit{symplectic with respect to the Weil pairing} $\mu_2$ if the matrix $[ \mu_2(B_i, B_j ) ]_{i, j}$ is equal to $\Omega_{\IZ/2\IZ}$, where
$\Omega_R$ is the $2g \times 2g$ matrix of the form
$$
\Omega_R \, = \, \begin{bmatrix}\mathbf{0} & \mathbf{I}_g \\ -\mathbf{I}_g & \mathbf{0} \end{bmatrix}
$$
whose entries are in the ring $R$.
The pair $\ppastorb$ is called a \textit{principally polarized abelian surface with level $2$ structure}.
Two principally polarized abelian surfaces with level $2$ structures $\ppastorb$ and $\ppastorbpr$ are said to be isomorphic
if and only if
there exists an isomorphism $a: \ppas \to \ppas'$ of principally polarized abelian surfaces such that $a(B_i) = B_i'$.

We define the following set of functions on $\IH_2$.
\begin{equation} \label{eq:rosinvs}
\lambda_1(\PM) = \left(\frac{\vartheta_0(\PM)\vartheta_1(\PM)}{\vartheta_2(\PM)\vartheta_{3}(\PM)}\right)^2, \,\,
\lambda_2(\PM) = \left(\frac{\vartheta_1(\PM)\vartheta_{12}(\PM)}{\vartheta_{2}(\PM)\vartheta_{15}(\PM)}\right)^2, \,\,
\lambda_3(\PM) = \left(\frac{\vartheta_0(\PM)\vartheta_{12}(\PM)}{\vartheta_{3}(\PM)\vartheta_{15}(\PM)}\right)^2.
\end{equation}
Given $\PM \in \IH_2$, we say that $\blambda(\PM) := (\lambda_1(\PM), \lambda_2(\PM), \lambda_3(\PM))$ is \textit{a triple of Rosenhain invariants for $\PM$}. There are other possible choices for defining the Rosenhain invariants but we fix the above choice in this article for the sake of simplicity. This choice is the same as Gaudry's 2007 article\cite{gaudry2007}. The reader interested in learning more about Rosenhain invariants is referred to \cite{mumford1983,mumford2012}.

Let $\PM \in \IH_2$ and consider the complex hyperelliptic curve $\hypell_\PM$:
$$
\hypell_\PM: y^2 = x(x - 1)(x - \lambda_1(\PM))(x - \lambda_2(\PM))(x - \lambda_3(\PM)).
$$
We denote its hyperelliptic involution map $(x, y) \mapsto (x, -y)$ by $\hypinv$.
For each $i \in \{0, 1\}$, we denote by $P_i$ the point $(i, 0) \in \hypell$
and for each $j \in \{1, 2, 3\}$, we denote by $Q_j$ the point $(\lambda_j(\PM), 0)$.
The points fixed by $\hypinv$, called the Weierstrass points of $\hypell_\PM$, are then $P_0, P_1, Q_1, Q_2, Q_3$ and the point $\infty$ at infinity.

The Jacobian $\Jac[\hypell]$ of $\hypell_\PM$ is a complex principally polarized abelian surface $\ppas_\PM$ isomorphic to the torus $\IC^2 / (\IZ^2 + \PM \IZ^2)$. The Jacobian $\Jac[\hypell]$ is isomorphic to the quotient of the group of degree-zero divisors of $\hypell$ by its subgroup of principal divisors, and so elements of a Jacobian can be thought of as divisor classes. 
A basis $\torb_\PM$ for the $2$ torsion subgroup $\ppas_\PM[2]$ of $\ppas_\PM$ is given by $\torb_\PM = \{B_1, B_2, B_3, B_4\}$ where
\begin{equation} \label{eq:torbasis}
\begin{split}
B_1 &= [Q_1 + Q_2 - 2 \infty] \\
B_2 &= [P_0 + P_1 - 2 \infty] \\
B_3 &= [Q_2 + Q_3 - 2 \infty] \\
B_4 &= [P_0 - \infty].
\end{split}
\end{equation}
One can verify that not only is this a basis for $\ppas_\PM[2]$, but it is also symplectic with respect to the Weil pairing $\mu_2$. Just like the choice of Rosenhain invariants, one could use other choices to associate a different symplectic basis to the hyperelliptic curve $\hypell_\PM$ but we use this choice for the rest of the article. Hence the pair $\ppastorb[\PM]$ is a principally polarized abelian surface with level $2$ structure.

\begin{ex}[continues={ex:runningex2},label={ex:runningex3}]
We continue with the CM field $$\cmf = \cmfdab{\alpha}[809]{53}{500}.$$
This CM field has two pairs of CM types up to equivalence.
They are:
\begin{aligncol*}{2}
\Phi &= \{ \, \alpha \mapsto 6.3813\ldots i \, \, , \, \,  \alpha \mapsto 3.5041\ldots i \, \} \\
\Phi' &= \{ \, \alpha \mapsto 6.3813\ldots i \, \, , \, \,  \alpha \mapsto -3.5041\ldots i \, \}
\end{aligncol*}
We consider a complex principally polarized abelian surface $\ppas$ with complex multiplication by $\Ocmf$ isomorphic to $\IC^2 / (\Phi(\frako))$ where $\frako$ is the ideal $(49, \alpha+5)$ of $\Ocmf$. Our principally polarized abelian surface $\ppas$ is isomorphic to $\ppas_\PM$
where
\[
\PM \approx \begin{bmatrix}
1.5852i & -1.6036 \\
-1.6036 & 1/2 + 1.7723i
\end{bmatrix}.
\] 
Defining $\torb_\PM$ as in \eqref{eq:torbasis}, we find that $\ppastorb[\PM]$ is a principally polarized abelian surface with level $2$ structure over $\IC$. 
\end{ex}

If $\ppas_\PM$ has complex multiplication by $\Ocmf$,
that is $\End A \cong \Ocmf$,
then the set $\blambda(\PM)$ consists only of algebraic numbers.
The field of moduli of $\ppastorb[\PM]$ is $\IQ(\blambda(\PM))$.
This fact, combined with \cite[Corollary 18.9]{shimura1998}
gives the following equality of fields:
\begin{equation} \label{eq:cmkr}
\CMcmfrcmtr[2] = \Kr( \, \blambda(\PM) \, ).
\end{equation}

  \subsection{The algorithm and implementation}
  \label{subsec:Hcmfr1intheory}
\writefilename

Let $\cmfcmt$ be a primitive quartic CM pair and let $\cmfrcmtr$ be its reflex pair.

We discuss how to obtain $\Hcmfr[1]$ assuming \eqstar{2} holds. Recall that we may use \Cref{algo:doesstarmhold} to determine whether or not \eqstar{2} holds, which in turn lets us know if we can use this method or not. Under Stark's conjectures, for any positive integer $m$, the field $\Hcmfor[m]$ can be obtained complex-analytically using Stark units \cite{roblot2000}. 
In practice, this works. If we encounter a case where it fails, then this case is a contradiction to Stark's conjectures and we would fallback to Kummer-theory based algorithms.
Once we have computed a set of defining polynomials for the compositum, we may then use Galois theory via the technique described in \cite[Section 6]{engemorain2003} to determine a set of defining polynomials for to obtain $\Hcmfr[1]$. 

\writefilename

The author has written a partial implementation of the above algorithm. The implementation is meant to show, as a proof-of-concept, that CM theory algorithms can compute Hilbert class fields of various examples of quartic CM fields which the current implementations of the known algorithms cannot. We elaborate more on this in \Cref{subsec:cmbeatskummer}. 
This implementation uses SageMath to interface with the PARI/GP \texttt{fgag.gp} script (see page~\pageref{fgagmentioned}) in order to find $\srcgcmf[\frakm]$ and the group $\Idcmfrcmtr[m]$. In order to compute period matrices of the relevant principally polarized abelian surfaces and to compute the action of $\Clcmfr[m]$ on the Rosenhain invariants using explicit Shimura reciprocity \cite{streng2012}, we use Streng's SageMath \textsc{Recip} package. Finally, SageMath
interfaces with PARI/GP to approximate the needed theta constants, using the algorithm in \cite{engethome2013}, for the Rosenhain invariant computations.

\begin{ex}[continues={ex:runningex3},label={ex:runningex4}]
Take $$G = \ker \effonetwo = \ker(\Clcmfr[2] \to \srcgcmf[2])$$ and $$H = \ker \effzerotwo = \ker(\Clcmfr[2] \to \Clcmfr[1]).$$ By applying explicit Shimura reciprocity, we find, for each $[\frakg] \in G$, a set of integers $i_0, i_1, i_2, i_3$, a root of unity $\mu$, and a period matrix $\PM'$ such that
$$
\lambda_1(\PM)^{[\frakg]} = \left(\frac{\vartheta_{i_0}(\PM')\vartheta_{i_1}(\PM')}{\vartheta_{i_2}(\PM')\vartheta_{i_3}(\PM)}\right)^2.
$$
For each period matrix encountered in the previous step, we may then compute approximations of the squares of the relevant theta constants.

Consider the polynomial
$$
p_1(X) = \prod_{[\fraka] \in G/H} \left( X - \sum_{h \in H} \widetilde\lambda_1(\PM)^{[\frakh][\fraka]}\right) 
$$
where for each $[\frakg] \in G$, the value $\widetilde\lambda_1(\PM)^{[\frakg]}$ is an approximation of $\lambda_1(\PM)^{[\frakg]}$ with a sufficiently high precision for the next steps. Using the approximating polynomial $\widetilde{p}_1(X)$ and the methods in \cite[Section II.10]{streng2010} to recover a polynomial with coefficients in $\cmfr$ from this approximation, we find a defining polynomial $p_1(X)$ of the extension $\cmfr(\lambda_1(\PM))/\cmfr$ as follows:
\begin{dmath*}
d \cdot p_1(X) = d X^8 + (-301220403431369353045700111055149125{\alpha_r}^2
\\
- 29438063764199719491190907374578941125) X^7 + \ldots
\end{dmath*}
with $d = 5^5 \cdot 11^4 \cdot 71^2 \cdot 251^4 \cdot 311^2 \cdot 431^2$. Observe that this polynomial satisfies $$p_1(X)~\in~\frac{1}{d}~\Ocmfr[X]~\subseteq~\cmfr[X].$$ Finally, one may verify that the extension of $\cmfr$ defined by this polynomial is unramified and cyclic of
degree $8$. This means that $$\Hcmfr(1) \cong \cmfr[X]/(p_1(X)).$$%
\end{ex}

  \subsection{Comparison to the existing Kummer theory algorithm}
  \label{subsec:cmbeatskummer}
\writefilename

Other methods are known for computing abelian extensions, and in particular Hilbert class fields.

One approach is an algorithm based on Kummer theory \cite{fieker2001}.
This algorithm can find abelian extensions $\nfext$, say of degree $d$,
of a general number field $\nf$.
However, this algorithm requires that the base field $\nf$
has sufficiently many roots of unity or that we
consider the larger field $\nf' = \nf(\zeta_d)$ to find abelian extensions of the original base field $\nf$.
Such an algorithm is at a disadvantage when $\nf$ is a quartic CM field
and the required extension $L$ has a large prime power degree,
since working with a larger field $\nf' = \nf(\zeta_d)$ is required to use the algorithm.

Using current implementations of the Kummer algorithm on fields whose ideal class groups are cyclic with order a power of $2$, we are able to find defining polynomials for the Hilbert class fields of primitive quartic CM fields whose class groups are cyclic of order up to $16$. 

For primitive quartic CM fields whose class groups are cyclic of order $32$, there are examples for which our implementation of the CM theory algorithm outdoes current implementations of the Kummer theory algorithm. For example, computing the cyclic degree $32$ extension
$\Hcmfr$ of $$\cmfr = \cmfdab{\alpha}[53]{104}{796},$$
which satisfies \eqstar{2}, 
takes less than 10 minutes using our implementation
while the \texttt{bnrclassfield} function of PARI
and the \texttt{HilbertClassField} function of MAGMA do not finish within twenty-four hours \footnote{The machine used is a laptop with 16 GB of RAM. Its processor was an \textit{AMD Ryzen 7 2700U}.}.

\textbf{Acknowledgments.} I would like to thank my supervisors, Andreas Enge and Marco Streng, for the careful reading and the invaluable suggestions and comments they gave.




%
\printbibliography

\end{document}